\newcommand{\RR}{\mathbb{R}}
\newcommand{\CC}{\mathbb{C}}
\newcommand{\cM}{\mathcal{M}}
\DeclareMathOperator{\codim}{codim}
\DeclareMathOperator{\cone}{Cone}
\DeclareMathOperator{\Star}{star}
\DeclareMathOperator{\Conv}{Conv}
\DeclareMathOperator{\Newt}{Newt}
\DeclareMathOperator{\trop}{trop}
\DeclareMathOperator{\lk}{lk}
\DeclareMathOperator{\Spec}{Spec}
 \title{Binary geometries from pellytopes}
\author{Lara Bossinger}
\address{%
Intituto de Matemáticas, Unidad Oaxaca,\\
Universidad Nacional Autónoma de México \\
\email{lara@im.unam.mx}
}
\author{Máté L. Telek}
\address{%
  MPI for Mathematics in the Sciences, Leipzig \\
\email{mate.telek@mis.mpg.de }
}
  \author{Hannah Tillmann-Morris}
  \address{%
 MPI for Mathematics in the Sciences, Leipzig\\
\email{hannah.tillmann-morris15@imperial.ac.uk}
}
 \date{2020/10/11}
\begin{document}

\maketitle

\begin{abstract}
\noindent
    Binary geometries have recently been introduced in particle physics in connection with stringy integrals. 
    In this work, we study a class of simple polytopes, called \emph{pellytopes}, whose number of vertices are given by Pell's numbers.
    We provide a new family of binary geometries determined by pellytopes as conjectured by He--Li--Raman--Zhang. 
    We relate this family to the moduli space of curves by comparing the pellytope to the ABHY associahedron.
\end{abstract}

\section{Introduction}

Binary geometries are affine varieties with stratifications determined by certain simplicial complexes.
These curious geometric objects -- like positive geometries -- first arose from the study of canonical forms on polytopes and amplituhedra, as a novel method to compute scattering amplitudes \cite{Arkani-Hamed:2013jha,PosGeom2017}.
The stratification of the binary geometry leads to a factorization of the amplitude.
 All systematically studied examples of binary geometries arise from simplicial complexes associated with finite type cluster algebras and generalized permutahedra~\cite{AHLT,Postnikov_permutahedra}, for instance the ABHY kinematic associahedron~\cite{AHL::Stringy}.
In this paper we initiate the study of binary geometries more abstractly by focusing on an elementary example not belonging to either class.

More precisely, given a flag simplicial complex $\Delta$ on $[n]$ we write $i\not\sim j$ for $i,j\in \Delta$ if $\{i,j\}\not \in \Delta$.
We associate to each $i\in[n]$ a polynomial in $\mathbb C[u_1,\dots,u_n]$ determining a \emph{$u$-equation} 
\begin{eqnarray}\label{eq:u-equations}
    R_i=u_i+\prod_{j\not\sim i}u_j^{a_{ij}}-1=0
\end{eqnarray}
for some integers $a_{ij}>0$. 
The affine variety in $\mathbb C^n$ defined by the $n$ equations of this form is a binary geometry if it satisfies certain boundary conditions (see Definition~\ref{Def:BinaryGeom}). In particular, a binary geometry is stratified by binary geometries corresponding to links of $\Delta$, see Lemma~\ref{Lemma:Link}.
{Notice that requiring all coordinates to be real and non-negative forces them to be in the interval $[0,1]$; hence the name \emph{binary}.} 

In this paper we focus on the simplicial complex arising from a polytope not belonging to either of the aforementioned classes: the \emph{pellytope} is defined as
\begin{eqnarray}\label{eq:pellytope}
    \mathcal P_d:=\Newt\left(\prod_{i=1}^d(1+y_i)\prod_{j=1}^{d-1}(1+y_j+y_jy_{j+1})\right) \subset \mathbb R^{d}.
\end{eqnarray}
It is a $d$-dimensional simple polytope with $3d-1$ facets and its number of vertices is given by \emph{Pell's number} $n_{d+1}$, defined recursively by 
\begin{equation}\label{eq:Pellnumber}
n_1=1,\quad n_2=2\quad \text{and} \quad n_d = 2n_{d-1} +n_{d-2}
\end{equation} 
(see Corollary~\ref{Cor:PellVertex}).
The (inner) normal fan of $\mathcal{P}_d$, denoted  $\Sigma_{d}$, determines a flag simplicial complex.
The pellytope has been studied by physicists in \cite[\S4]{HeLi::Stringy} as an example of a simplicial fan with desirable combinatorial properties: stars of $\Sigma_d$ factor as products of copies of $\Sigma_i$ for $i<d$ (see \eqref{eq:star} and Lemma~\ref{lem:stars}).
These lead the authors in \emph{loc.cit.} to conjecture that the pellytope determines a binary geometry. We verify their conjecture:

\begin{thm}\label{thm:main}
    The pellytope $\mathcal{P}_{d}$ determines a binary geometry $\widetilde{\mathcal U}_{d}\subset \mathbb C^{3d-1}$ defined by $3d-1$ $u$-equations.
\end{thm}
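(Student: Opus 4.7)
The plan is to proceed by induction on the dimension $d$ of the pellytope, with base case $d=1$ (the interval, whose normal fan has two antipodal rays) treated by hand: here the $u$-equations reduce to $u_1+u_2-1=0$ together with $u_2+u_1-1=0$, trivially defining a binary geometry isomorphic to $\mathbb{A}^1$. The key ingredients for the inductive step are already set up in the paper: Lemma~\ref{lem:stars}, which states that each star of $\Sigma_d$ is a product of copies of $\Sigma_k$ for $k<d$, and Lemma~\ref{Lemma:Link}, which says that a variety cut out by $u$-equations of the form \eqref{eq:u-equations} is a binary geometry provided the boundary stratum $\{u_i=0\}$ recovers the binary geometry attached to $\lk_{\Sigma_d}(i)$ for each vertex $i$.

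First, I would label the $3d-1$ rays of $\Sigma_d$ (equivalently, the facets of $\mathcal{P}_d$) explicitly from the factorization in \eqref{eq:pellytope}, and read off the non-adjacency relation $i\not\sim j$ directly from the normal fan: $i\not\sim j$ if the corresponding rays do not lie in a common cone. With this combinatorial data in hand, I would write down the $3d-1$ $u$-equations, choosing exponents $a_{ij}$ so that the system is compatible with the product decomposition of stars. Since $\Sigma_d$ is simplicial and the star factorization in Lemma~\ref{lem:stars} uses only disjoint unions of smaller fans, the natural choice should be $a_{ij}=1$ throughout; if the combinatorics force a refinement, the exponents are pinned down uniquely by requiring compatibility between the $u$-equations of $\mathcal{P}_d$ and those of the pellytopes appearing in its stars.

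Next, for each ray $i$, I would verify that setting $u_i=0$ in the remaining equations reduces them to the $u$-equations of the flag complex $\lk_{\Sigma_d}(i)$. Equation $R_i=0$ with $u_i=0$ forces $\prod_{j\not\sim i}u_j^{a_{ij}}=1$, which on the non-negative locus pins down $u_j=1$ for every $j\not\sim i$; substituting these values into the remaining $R_k$ with $k\sim i$ should produce exactly the $u$-equations indexed by the vertices of $\lk_{\Sigma_d}(i)$. By Lemma~\ref{lem:stars}, this link is a join of smaller $\Sigma_{k_\ell}$, and joins of flag simplicial complexes correspond to products of binary geometries, so by the inductive hypothesis the stratum $\{u_i=0\}$ is the product of binary geometries $\widetilde{\mathcal{U}}_{k_1}\times\cdots\times\widetilde{\mathcal{U}}_{k_r}$. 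Applying Lemma~\ref{Lemma:Link} then upgrades $\widetilde{\mathcal{U}}_d$ to a binary geometry.

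The main obstacle, I expect, is the combinatorial bookkeeping in the second step: identifying the non-adjacency exponents $a_{ij}$ so that the $u$-equations are globally consistent with the star factorization at every vertex simultaneously, rather than just one at a time. A secondary subtlety is showing that no extraneous components appear in $\widetilde{\mathcal{U}}_d$; this would follow from a dimension count (the $3d-1$ equations in $\mathbb{C}^{3d-1}$ should define a complete intersection of the expected dimension) together with the inductively established boundary stratification, which together pin down the component containing the positive locus as the binary geometry.
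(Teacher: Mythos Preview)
Your inductive strategy via Lemma~\ref{Lemma:Link}(b) and the star decomposition of Lemma~\ref{lem:stars} is exactly the paper's approach, and the choice $a_{ij}=1$ is correct. The genuine gap is in the hypothesis of Lemma~\ref{Lemma:Link}(b): before invoking it you must already know that $\widetilde{\mathcal U}_d$ is \emph{irreducible} of dimension $d$, and your proposal for this step does not work. The ``dimension count'' is off---there are $3d-1$ equations in $3d-1$ variables, so a complete intersection would have dimension $0$, not $d$; the $u$-equations are highly dependent and do not cut out a complete intersection. And restricting attention to ``the component containing the positive locus'' is circular: Lemma~\ref{Lemma:Link}(b) takes irreducibility of the whole variety as input, so you cannot use its conclusion to isolate a component after the fact, nor does the inductive boundary structure by itself rule out extra components away from the coordinate hyperplanes.

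The paper handles this by a route your proposal does not anticipate. It realizes the $u_i$ not as formal variables with exponents to be chosen, but as the minimal generators of the semigroup of bounded characters on the very affine variety $\mathcal U_d = \{p_i\neq 0,\ q_j\neq 0\}\subset(\mathbb C^*)^d$ (Proposition~\ref{Prop:Gen_ui} and Corollary~\ref{cor:ui monomials}). It then proves in Proposition~\ref{prop: generators of K} that the ideal generated by the $u$-equations~\eqref{eq:equations Gamma} coincides with the kernel of the ring map $\widetilde f\colon \mathbb C[u_1,\dots,u_{3d-1}]\to S/I$ of~\eqref{eq:map tilde f}. A kernel is automatically prime, giving irreducibility for free; and since the function field of the quotient is $\mathbb C(y_1,\dots,y_d)$, the dimension is $d$. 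This character-lattice construction is the technical heart of the argument and is precisely what is missing from your outline.
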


We call $\widetilde{\mathcal U}_{d}$ the \emph{Pellspace}.
Although the pellytope does not belong to the class of generalized permutahedra it is closely related to the associahedron.
The ABHY associahedron $\mathcal A_d$ is realized as the Newton polytope of a polynomial divisible by the polynomial defining $\mathcal P_d$ in~\eqref{eq:pellytope}, so the normal fan of $\mathcal A_{d}$ is a refinement of $\Sigma_d$.
The binary geometry associated to $\mathcal A_{d}$ is an affine chart of the moduli space of stable rational curves, denoted by $\widetilde{\mathcal M}_{0,n}$ where $n=d+3$.
This enables us to prove the following statement:

\begin{cor}\label{cor:M0n and pellytope}
$\widetilde{\mathcal M}_{0,n}$ is an affine subset of a blowup of the binary geometry $\widetilde{\mathcal U}_{d}$ of the pellytope $\mathcal P_{d}$.
\end{cor}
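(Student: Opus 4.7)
The plan is to leverage the stated refinement of normal fans. Since the polynomial defining $\mathcal P_d$ divides the polynomial whose Newton polytope is $\mathcal A_d$, the normal fan of $\mathcal A_d$ refines $\Sigma_d$. In particular every ray of $\Sigma_d$ is also a ray of the associahedron fan, so the $3d-1$ Pellspace coordinates $u_1,\dots,u_{3d-1}$ embed into the collection of $u$-variables of $\widetilde{\mathcal M}_{0,n}$ indexed by the diagonals of a convex $(d+3)$-gon. The remaining diagonals index the extra rays, each lying in the relative interior of a unique cone of $\Sigma_d$.

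First, I would make this injection between facets of $\mathcal P_d$ and a distinguished subset of diagonals of the $(d+3)$-gon explicit, reading it off the factorization of the two defining polynomials. For every additional ray I record the unique cone of $\Sigma_d$ in whose relative interior it lies; this combinatorial data prescribes the centers of the blowup used in the next step.

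Next, by analogy with the toric dictionary between fan refinements and blowups, I would construct a birational modification $\pi\colon\widehat{\mathcal U}_d\to \widetilde{\mathcal U}_d$ by iteratively blowing up the coordinate subvarieties $\{u_{i_1}=\cdots=u_{i_k}=0\}\subset \widetilde{\mathcal U}_d$ corresponding to the cones subdivided by new rays, ordered by inclusion (innermost cones first). On a suitable affine chart of $\widehat{\mathcal U}_d$, the strict transforms of the Pellspace $u$-equations should factor through the exceptional divisors, with each exceptional parameter playing the role of an associahedron $u$-variable attached to a new diagonal. After identifying each Pellspace $u_i$ with a specific monomial in the associahedron $u$-variables (coherent with the polynomial factorization), this chart is expected to coincide with $\widetilde{\mathcal M}_{0,n}$.

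The main obstacle is verifying this chart-level identification cleanly: the exponents $a_{ij}$ in the Pellspace equations must match those produced by the successive blowups, and \emph{all} associahedron $u$-equations, including those attached to the new diagonals, must hold on $\widehat{\mathcal U}_d$. I would address this by induction on $d$, using the star factorization of $\Sigma_d$ (Lemma~\ref{lem:stars}) to localize the analysis at each blowup center to a product of lower-dimensional Pellspaces, where the inductive hypothesis applies and the associahedron combinatorics reduce to the well-understood affine charts of smaller $\widetilde{\mathcal M}_{0,k}$.
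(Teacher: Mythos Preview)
Your plan diverges from the paper's argument, which is considerably shorter and more conceptual. The paper proves a general lemma (Lemma~\ref{Lemma:BlowUp}): whenever $f_2$ divides $f_1$ in $\CC[y_1,\dots,y_d]$ (both with nonzero constant term), the affine closures $\widetilde{U}_i = \Spec \CC[\Gamma_i]$ of the very affine varieties $\{f_i\neq 0\}$ are related by a birational morphism $\widetilde{U}_1 \to \widetilde{U}_2$ that is the restriction of a toric blowup $\pi\colon X_1 \to X_2$, where $X_i$ is the projective toric variety of the normal fan of $\Newt f_i$. The key is the identification $\widetilde{U}_i = X_i \setminus H_i$ with $H_i$ the hypersurface cut out by $f_i$ (via \cite[Proposition~5.19]{Lam::LectureNotes}), together with the observation that $\pi^{-1}(H_2)\subset H_1$. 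Applying this with $f_1 = G_{n-3}$ from~\eqref{eqn: associahedron as a Newton polytope} and $f_2$ the pellytope polynomial gives the corollary immediately: $\widetilde{\mathcal M}_{0,n}=X_1\setminus H_1$ is an affine open in $\pi^{-1}(\widetilde{\mathcal U}_d)=X_1\setminus\pi^{-1}(H_2)$. No explicit bookkeeping of extra rays, no induction, no chart-by-chart matching of $u$-equations.

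Your route---blowing up coordinate strata of $\widetilde{\mathcal U}_d$ directly and then recognising the result as $\widetilde{\mathcal M}_{0,n}$ by comparing $u$-equations---could in principle be made to work, but as written it is a programme rather than a proof. You yourself flag the obstacle: checking that \emph{all} associahedron $u$-equations, with the correct exponents $a_{ij}$, hold on the iterated blowup is precisely the nontrivial content, and the proposed induction via Lemma~\ref{lem:stars} would still require tracking how the additional rays of $\Sigma_{\mathcal A_d}$ sit inside the cones of $\Sigma_d$ and how exceptional coordinates from different blowup centres interact. The paper's argument sidesteps all of this by moving one level up to the ambient projective toric varieties, where the refinement--blowup dictionary is standard and the single divisibility $f_2\mid f_1$ does all the work.
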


We therefore expect that there exists an alternative compactification of $\mathcal M_{0,n}$ in which $\widetilde{\mathcal U}_d$ is an affine chart. 
We explore the case of $d=3$ in the final section.
It would be interesting to understand the moduli interpretation and combinatorial structure of this space more generally.

\medskip\noindent
{\bf Outline.} In \S\ref{Sec:BinGeom} we recall the definition of a binary geometry and the necessary concepts from polyhedral geometry. In \S\ref{Sec::Pelltope} we study the pellytope and its combinatorics. In \S\ref{Sec:Characters} we study the very affine variety determined by the pellytope, which is followed by the proof of Theorem~\ref{thm:main} in \S\ref{sec:u-eq for pellytope} and the proof of Corollary~\ref{cor:M0n and pellytope} in \S\ref{sec:curves}.

\section{Binary geometries}
\label{Sec:BinGeom}
Let $[n] = \{1,\dots,n\}$ and consider a simplicial complex $\Delta$ on $[n]$. More precisely, $\Delta$ is a non-empty collection of subsets of $[n]$ satisfying the following property:
\begin{itemize}
    \item[(i)] If $S \in \Delta$ and $S' \subset S$ then $S' \in \Delta$.
\end{itemize}
If, additionally, a simplicial complex $\Delta$ satisfies:
\begin{itemize}
    \item[(ii)] $\{k\} \in \Delta$ for each $k \in [n]$ and 
    \item[(iii)] $\{k_1,\dots,k_r\} \in \Delta$ whenever $\{k_i,k_j\} \in \Delta$ for all $1\leq i < j \leq r$,
\end{itemize}
we call $\Delta$ a \emph{flag complex}. A simplicial complex $\Delta$ is \emph{pure} if all the maximal sets in $\Delta$ (with respect to inclusion) have the same cardinality. We say that $i, j \in [n]$ are \emph{incompatible} if $\{i, j\} \notin \Delta$, and write $i \nsim j$.
Before we proceed to recall the definition of a binary geometry, we discuss two examples of flag complexes that will serve as the running examples of this section.
\begin{exa}
\label{Ex:Sec2:Run1}
    \begin{itemize}
        \item[(a)] On $[2] = \{1,2\}$, the collection of subsets $\Delta = \big\{ \{1\},\{2\}\big\}$ form a pure flag complex where $1,2$ are incompatible.
        \item[(b)] On $[4] = \{1,2,3,4\}$, the collection of subsets 
        \[\Delta = \big\{ \{1\},\{2\}, \{3\},\{4\}, \{1,3\},\{1,4\},\{2,3\},\{2,4\} \big\}\]
        is a pure flag complex. This flag complex is combinatorially isomorphic to the inner normal fan of a square (see Figure~\ref{FIG1}). 
        In this paper, we will mostly consider flag complexes that correspond to the inner normal fans of certain polytopes. 
    \end{itemize}
\end{exa}

\begin{figure}[t]
\centering
\begin{minipage}[h]{0.3\textwidth}
\centering
\includegraphics[scale=0.35]{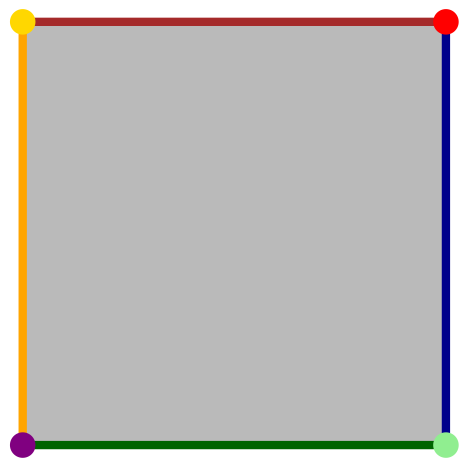}
\end{minipage}
\hspace{10 pt}
\begin{minipage}[h]{0.3\textwidth}
\centering
\includegraphics[scale=0.35]{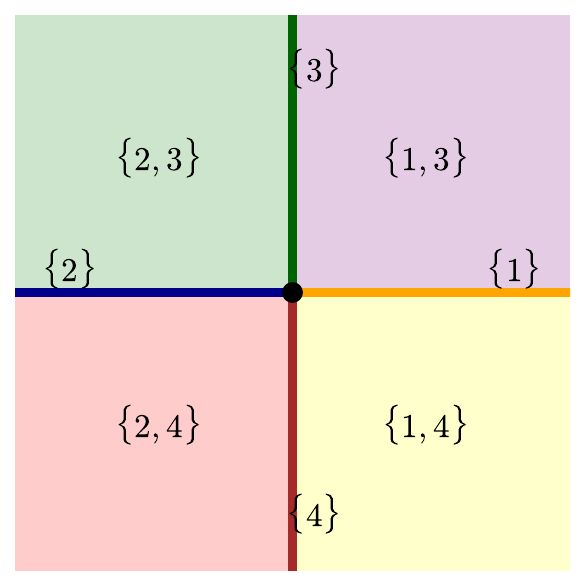}
\end{minipage}

\caption{{\small  A square and its inner normal fan, which is isomorphic to the flag complex in Example~\ref{Ex:Sec2:Run1}(b).}}\label{FIG1}
\end{figure}

\begin{dfn}
\label{Def:BinaryGeom}
\cite[Definition 2.1]{Lam::LectureNotes}
    Let $\Delta$ be a flag simplicial complex on $[n]$. A \emph{binary geometry} $\widetilde{U}$ for $\Delta$ is an  affine algebraic variety $\widetilde{U} \subset \mathbb{C}^n$ cut out by $n$ equations of the form~\eqref{eq:u-equations},
satisfying the following properties:
\begin{enumerate}[(i)]
    \item $\dim \widetilde{U} =  \max_{S \in \Delta} \# S$;
    \item for $S \subset [n]$ the subvariety $\widetilde{U}_S = \widetilde{U} \cap \{ u \in \mathbb{C}^n \mid u_i = 0 \; \forall i \in S\}$ is non-empty if and only if $S \in \Delta$;
    \item if $\widetilde{U}_S$ is non-empty, it is irreducible of codimension $\# S$ in $\widetilde{U}$.
\end{enumerate}
\end{dfn}

\begin{exa}
\label{Ex:d1BinaryGeom}
    The flag complex from Example~\ref{Ex:Sec2:Run1}(a) gives rise to the equations
    \[R_1 = u_1 + u_2 - 1 = 0, \qquad R_2 = u_2 + u_1 -1 = 0.\]
    The variety $\widetilde{U} = \{ u \in \mathbb{C}^2 \mid R_1(u) = 0, \; R_2(u) = 0\}$ has dimension $1$ -- thus property~(i) in Definition~\ref{Def:BinaryGeom} is satisfied. For $S = \{1,2\}$, the subvariety 
    \[\widetilde{U}_S = \big\{ u \in \mathbb{C}^2 \mid u_1 + u_2 - 1 = 0,\, u_1 = 0,\, u_2 = 0   \big\} \]
    is empty. For $S = \{1\}$ and $S = \{2\}$, we have
    \[\widetilde{U}_{\{1\}} = \big\{ u \in \mathbb{C}^2 \mid u_1 + u_2 - 1 = 0,\, u_1 = 0   \big\} = \big\{(0,1)\big\}, \quad \widetilde{U}_{\{2\}} = \big\{(1,0)\big\},\]
    which are irreducible subvarieties of $\widetilde{U}$ of codimension $1$. Thus, $\widetilde{U}$ is a binary geometry. By \cite[Theorem 2.9]{Lam::LectureNotes}, $\widetilde{U}$ is the only one dimensional binary geometry.
\end{exa}

If $\widetilde{U}$ is a binary geometry, then the non-empty subvarieties $\widetilde{U}_S$ for $S \in \Delta$ are also binary geometries. Their underlying flag complexes can be described as follows. The \emph{link} $\lk_\Delta S$ of $S \in \Delta$ is the simplicial complex 
\begin{eqnarray}\label{eq:link}
    \lk_\Delta S := \big\{ T \in \Delta \mid S \cap T = \emptyset \text{ and } S \cup T \in \Delta \big\} 
\end{eqnarray} 
on the set $W_S = \{j \in [n] \setminus S \mid S \cup \{ j \} \in \Delta\}$. In other words, the set $W_S$ contains all $j$ that are compatible with $S$ but do not lie in $S$.

\begin{exa}
    Consider the flag complex from Example~\ref{Ex:Sec2:Run1}(b). The link of $S = \{3\}$ equals $\lk_\Delta(\{3\}) = \big\{ \{1\} , \{2\} \big\}$.
\end{exa}

If there exists a binary geometry for a flag complex $\Delta$, then $\Delta$ is pure by \cite[Corollary 2.7]{Lam::LectureNotes}. Lemma~\ref{Lemma:Link} relates $\widetilde{U}_S$ to the link $\lk_\Delta(S)$. Part (b) will be used in the proof of Theorem~\ref{thm:main}.


\begin{lemma}
\label{Lemma:Link}
    Let $\Delta$ be a pure flag complex on $[n]$, and let  $\widetilde{U}$ be an affine algebraic variety $\widetilde{U} \subset \mathbb{C}^n$ cut out by $n$ equations of the form as in \eqref{eq:u-equations}.
For $S \in \Delta $, the subvariety $\widetilde{U}_S = \widetilde{U} \cap \{ u \in \mathbb{C}^n \mid u_k = 0 \; \forall k \in S\}$ is cut out by the equations:
\begin{align*}
    u_k &= 0 \quad \text{for } k  \in S,\\
    u_j &= 1 \quad \text{for } j \notin W_S,\\
    R_i = \; u_i\; + \prod_{\ell \in W_S\colon \; \ell \nsim  i} u_\ell^{a_{i\ell}} - 1 &= 0  \quad  \text{ for } i \in W_S.
\end{align*}
Moreover, the following hold:
\begin{itemize}
    \item[(a)] If $\widetilde{U} $ is a binary geometry and $S \in \Delta$, then $\widetilde{U}_S$ is a binary geometry with underlying simplicial complex $\lk_\Delta S$.
        \item[(b)] If $\widetilde{U}$ is an irreducible variety of dimension $\max_{S \in \Delta} \# S$, and $\widetilde{U}_{\{k\}}$ is a binary geometry with underlying simplicial complex $\lk_\Delta\{k\}$ for each $k \in [n]$,
        then $\widetilde{U} $ is a binary geometry.
\end{itemize}
\end{lemma}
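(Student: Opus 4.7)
The proof naturally splits into three parts: the equation description, part (a), and part (b).

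For the equation description, the plan is to start with the defining equations of $\widetilde{U}$ together with $u_k=0$ for $k\in S$, and show that they reduce to the stated system. The key observations rely on the flag property. For $i\in W_S$, the set $S\cup\{i\}\in\Delta$, so every $j\in S$ is compatible with $i$ (i.e.\ $j\sim i$) and therefore does not appear in $\prod_{j\nsim i}u_j^{a_{ij}}$; this reduces $R_i$ to $u_i+\prod_{j\nsim i,\,j\notin S}u_j^{a_{ij}}-1=0$. For $i\notin S\cup W_S$, flagness forces the existence of some $k\in S$ with $k\nsim i$, so $u_k=0$ appears in the product, forcing $u_i=1$; these are exactly the equations $u_j=1$ for $j\notin W_S$. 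Finally, for $k\in S$, the equation $R_k$ becomes $\prod_{j\nsim k,\,j\notin S}u_j^{a_{kj}}=1$, and the same flagness argument shows $j\nsim k$ and $j\in W_S$ cannot occur simultaneously, so after substituting $u_j=1$ for $j\notin S\cup W_S$ this equation is trivially satisfied. Further simplifying the $R_i$ for $i\in W_S$ by the same substitution yields the claimed form, where the product runs only over $\ell\in W_S$ with $\ell\nsim i$.

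For part (a), I would combine the equation description with the definition of a binary geometry. By assumption $\widetilde{U}_S$ is non-empty, irreducible, of codimension $|S|$ in $\widetilde{U}$, hence of dimension $d_\Delta-|S|$ where $d_\Delta:=\max_{T\in\Delta}\#T$. Because $\Delta$ is pure, $\max_{T\in\lk_\Delta S}\#T=d_\Delta-|S|$, which verifies (i) of Definition~\ref{Def:BinaryGeom} for $\widetilde{U}_S$. For (ii) and (iii), I would identify $(\widetilde{U}_S)_T$ with $\widetilde{U}_{S\cup T}$ for $T\subset W_S$ and check that the incompatibility relation within $\lk_\Delta S$ on $W_S$ coincides with the one inherited from $\Delta$ (again by flagness). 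Then $\widetilde{U}_{S\cup T}$ is non-empty iff $S\cup T\in\Delta$ iff $T\in\lk_\Delta S$, and when non-empty it is irreducible of codimension $|S\cup T|=|S|+|T|$ in $\widetilde{U}$, giving codimension $|T|$ in $\widetilde{U}_S$.

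For part (b), I would proceed by induction on $|S|$. The base cases $|S|=0$ (trivial since $\widetilde{U}$ itself is irreducible of the correct dimension) and $|S|=1$ (given by hypothesis) are immediate; for $|S|=1$ the purity check uses that the link $\lk_\Delta\{k\}$ has maximal faces of size $d_\Delta-1$. For $|S|\geq 2$, pick any $k\in S$ and rewrite $\widetilde{U}_S=(\widetilde{U}_{\{k\}})_{S\setminus\{k\}}$. Since $\widetilde{U}_{\{k\}}$ is a binary geometry for $\lk_\Delta\{k\}$ by hypothesis, part (a) applied to $\widetilde{U}_{\{k\}}$ shows that this is non-empty iff $S\setminus\{k\}\in\lk_\Delta\{k\}$ iff $S\in\Delta$, and in that case irreducible of codimension $|S|-1$ in $\widetilde{U}_{\{k\}}$, hence codimension $|S|$ in $\widetilde{U}$.

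The main obstacle I expect is the careful bookkeeping in the equation reduction, specifically showing that the flag property is exactly what forces $u_j=1$ for $j\notin W_S$ and makes the $R_k$ with $k\in S$ redundant; once this is in hand, parts (a) and (b) follow by a straightforward dimension count and an induction that is essentially formal.
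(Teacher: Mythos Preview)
Your proposal is correct and follows essentially the same approach as the paper: for part (b) both you and the paper use the key identification $\widetilde{U}_S=(\widetilde{U}_{\{k\}})_{S\setminus\{k\}}$ together with the hypothesis that $\widetilde{U}_{\{k\}}$ is a binary geometry and the purity of $\Delta$ to deduce the required dimension and irreducibility. The paper omits the equation description and part (a), deferring to \cite[Proposition~2.6]{Lam::LectureNotes}, whereas you spell these out; note that in your argument for (b) you only need the \emph{definition} of binary geometry for $\widetilde{U}_{\{k\}}$ rather than part (a) itself.
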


\begin{proof}
We prove (b); part (a) follows directly from the proof of \cite[Proposition 2.6]{Lam::LectureNotes}. Let $S \in \Delta$ and choose $k \in S$. By assumption, $\widetilde{U}_{\{k\}}$ is a binary geometry with
\[ \dim \widetilde{U}_{\{k\}} = \max_{T \in \lk_\Delta\{k\}} \# T =  \left(\max_{V \in \Delta} \# V \right)- 1 = \dim \widetilde{U} - 1. \]
For the second equality above we use that $\Delta$ is pure, which implies that there exists $V \in \Delta$ such that $k \in V$ and  $\#V = \max_{S \in \Delta} \# S$.
Since $(S \setminus \{k\}) \in \lk_\Delta(\{k\})$ and $\widetilde{U}_{\{k\}}$ is a binary geometry by our assumption, it follows that $\widetilde{U}_S = (\widetilde{U}_{\{k\}})_{S \setminus \{k\}} $ is a non-empty irreducible variety of codimension $\#S$ in $\widetilde{U}$. Therefore, $\widetilde{U}$ is a binary geometry.
\end{proof}

The product of two binary geometries is also a binary geometry. To state this result, we recall the definition of the \emph{product of two simplicial complexes}. Let $\Delta, \Delta'$ be simplicial complexes defined on $[n], \, \{n+1,\dots,n+m\}$ respectively. The product simplicial complex is defined as
\[ \Delta \times \Delta' := \big\{F \cup F' \mid F \in \Delta, \, F' \in \Delta'  \big\}.\]
Note that every $\{k \} \in \Delta$ is compatible with every $\{k' \} \in \Delta'$ in $\Delta \times \Delta'$.

\begin{prop}\cite[Proposition 2.11]{Lam::LectureNotes}
\label{Prop:ProductBinGeom}
    Let $\widetilde{U}$ and $\widetilde{U}'$ be binary geometries for flag complexes $\Delta$ and $\Delta'$ on disjoint sets. Then  $\widetilde{U} \times \widetilde{U}'$ is a binary geometry for the product simplicial complex $\Delta \times \Delta'$.
\end{prop}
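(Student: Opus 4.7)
The plan is to verify the three defining properties of a binary geometry for $\widetilde U\times \widetilde U'$ relative to $\Delta\times\Delta'$, using the product structure to reduce each one to the known properties of $\widetilde U$ and $\widetilde U'$. First I would confirm the setup: since $\Delta$ and $\Delta'$ live on disjoint ground sets and every singleton of $\Delta$ is compatible with every singleton of $\Delta'$ in $\Delta\times\Delta'$, the incompatibility relation $\nsim$ on $[n+m]$ restricts to the incompatibility relations on $[n]$ and on $\{n+1,\dots,n+m\}$ separately. Consequently the $u$-equation $R_i$ in $\Delta\times\Delta'$ attached to an index $i\in[n]$ involves only the variables $u_j$ with $j\in[n]$ and coincides with the corresponding $u$-equation for $\Delta$, and similarly on the $\Delta'$-side. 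It follows that $\widetilde U\times\widetilde U'\subset\mathbb C^{n+m}$ is cut out by precisely the $n+m$ $u$-equations associated to $\Delta\times\Delta'$.

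Next I would check property (i). Every maximal face of $\Delta\times\Delta'$ has the form $F\cup F'$ with $F$ maximal in $\Delta$ and $F'$ maximal in $\Delta'$, so $\max_{T\in\Delta\times\Delta'}\#T = \max_{F\in\Delta}\#F + \max_{F'\in\Delta'}\#F' = \dim\widetilde U + \dim\widetilde U' = \dim(\widetilde U\times\widetilde U')$. For property (ii), given $T\subset[n+m]$ decompose uniquely as $T = S\cup S'$ with $S\subset[n]$ and $S'\subset\{n+1,\dots,n+m\}$; then since the defining equations split between the two blocks of variables, one has the identification
\[
(\widetilde U\times\widetilde U')_T \;=\; \widetilde U_S\times\widetilde U'_{S'}.
\]
This product is non-empty if and only if both factors are non-empty, which by the binary-geometry hypotheses on $\widetilde U$ and $\widetilde U'$ happens exactly when $S\in\Delta$ and $S'\in\Delta'$, i.e.\ exactly when $T\in\Delta\times\Delta'$ by the definition of the product simplicial complex.

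Finally, for property (iii), when $T\in\Delta\times\Delta'$ the subvariety $\widetilde U_S\times\widetilde U'_{S'}$ is a product of irreducible varieties (over $\mathbb C$) of codimensions $\#S$ in $\widetilde U$ and $\#S'$ in $\widetilde U'$ respectively. Hence it is irreducible of codimension $\#S+\#S' = \#T$ inside $\widetilde U\times\widetilde U'$, which is exactly what (iii) demands.

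The argument is largely bookkeeping, and the only substantive point that needs care is the first one: verifying that the $u$-equations for the product flag complex really do decouple into the two sets of $u$-equations for $\Delta$ and $\Delta'$. Once the disjoint-supports observation is in place, properties (i)--(iii) follow mechanically from the corresponding properties for the factors together with irreducibility of products of irreducible complex varieties.
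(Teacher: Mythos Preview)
The paper does not give its own proof of this proposition; it simply imports the result from \cite[Proposition 2.11]{Lam::LectureNotes}. Your argument is correct and is the natural one: the key observation that incompatibility in $\Delta\times\Delta'$ never crosses between the two ground sets makes the $u$-equations decouple, after which the factorization $(\widetilde U\times\widetilde U')_T=\widetilde U_S\times\widetilde U'_{S'}$ reduces (i)--(iii) to the corresponding properties of the factors together with the fact that products of irreducible varieties over $\mathbb C$ are irreducible. One small omission: you should also note that $\Delta\times\Delta'$ is again a flag complex (pairwise compatibility of a set $T=S\cup S'$ forces $S\in\Delta$ and $S'\in\Delta'$ by flagness of each factor, hence $T\in\Delta\times\Delta'$), since Definition~\ref{Def:BinaryGeom} requires this of the underlying complex.
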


\begin{exa}
  Let $\widetilde{U}$ be the one dimensional binary geometry from Example~\ref{Ex:d1BinaryGeom}. Its product is 
 \[\widetilde{U} \times \widetilde{U}   = \big\{ u \in \mathbb{C}^4  \;\big\mid \; u_1 + u_2 - 1 = 0, \; u_3 + u_4 - 1 = 0  \big\}.\]
 This variety equals the binary geometry given by the flag complex from Example~\ref{Ex:Sec2:Run1}(b). 
 For $S = \{3\}$, the subvariety $(\widetilde{U} \times \widetilde{U})_{\{3\}}$ is cut out by the equations $u_3 = 0$, $u_4 = 1$ and $u_1 + u_2 -1 = 0$. Thus $(\widetilde{U} \times \widetilde{U})_{\{3\}}$ is isomorphic to the one dimensional binary geometry $\widetilde{U}$ from Example~\ref{Ex:d1BinaryGeom}.
\end{exa}

\section{Pellytopes}
\label{Sec::Pelltope}

For each $d \in \mathbb{N}$, the pellytope $\mathcal{P}_d \subset \mathbb{R}^d$ defined in \eqref{eq:pellytope} is the Minkowski sum of the polytopes
\begin{equation}\label{Eq:PiQi}
    P_i = \Conv \{0,e_i\}, \quad    Q_j = \Conv \{0,e_j,e_j+e_{i+1}\},
\end{equation}
where $ i \in [d], j \in[d-1]$ and $e_1,\dots,e_d$ denote standard basis vectors of $\mathbb{R}^d$. 
In Corollary~\ref{Cor:PellVertex}, we show that the number of vertices of $\mathcal{P}_d$ is given by Pell's number \eqref{eq:Pellnumber}.  
For $d=1$, the pellytope is the segment $\Conv\{0,e_1\}$ and its inner normal fan is combinatorially isomorphic to the flag complex in Example~\ref{Ex:Sec2:Run1}(a). 
For $d=2$, we depicted the pellytope and its inner normal fan in Figure~\ref{FIG2}. The main result of this section is:

\begin{prop}
\label{Prop:PellyFan}
    For each $d \in \mathbb{N}$, the inner normal fan $\Sigma_d$ of the pellytope $\mathcal{P}_d$ is a simplicial fan with $n_{d+1}$ maximal cones and $3d-1$ rays spanned by the vectors $e_1, \dots , e_d,-e_1, \dots , -e_d, e_1 - e_2, \dots , e_{d-1} - e_d$.
\end{prop}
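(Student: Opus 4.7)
The approach is to exploit the Minkowski decomposition $\mathcal{P}_d = \sum_{i=1}^d P_i + \sum_{j=1}^{d-1} Q_j$, using the fact that the inner normal fan of a Minkowski sum is the common refinement of the inner normal fans of its summands. I would first enumerate the facet normals contributed by each summand, then verify simpliciality, and finally count maximal cones by an inductive argument.

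For the rays, the segment $P_i$ contributes the pair $\pm e_i$. The triangle $Q_j = \Conv\{0,e_j,e_j+e_{j+1}\}$ lies in the affine span of $e_j,e_{j+1}$; a direct computation of the inward-pointing edge normals yields the three rays $-e_j$, $e_{j+1}$, and $e_j-e_{j+1}$. Taking the union, the $\pm e_i$ already absorb every $-e_j$ and $e_{j+1}$ contributed by the triangles, so the only new rays are $e_j-e_{j+1}$ for $j\in[d-1]$. This gives exactly $2d+(d-1) = 3d-1$ candidate rays. To confirm each candidate $v$ is a ray of $\Sigma_d$ (and not killed in the common refinement), I would show that the face of $\mathcal{P}_d$ minimizing $v$, obtained as the Minkowski sum of the minimizing faces on the summands, is $(d-1)$-dimensional.

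For simpliciality, equivalently simplicity of $\mathcal{P}_d$, the normal fans of the segments have no internal structure beyond a single hyperplane, while the normal fan of each triangle $Q_j$, modulo its codimension-$2$ lineality, is simplicial. A maximal cone of the common refinement is built by intersecting one maximal cone from each summand, and I would verify that the particular alignment of the chain $Q_1,\dots,Q_{d-1}$ (where adjacent triangles share only the rays $\pm e_{j+1}$) forces every such intersection to be generated by exactly $d$ linearly independent rays from the list above. For the vertex count, I would separate vertices of $\mathcal{P}_d$ according to which vertex of $Q_{d-1}$ they pair with under Minkowski summation: the three cases (vertex $0$, $e_{d-1}$, or $e_{d-1}+e_d$) constrain the accompanying vertex of $P_d$ and of the sub-pellytope in the first $d-1$ coordinates differently, and careful bookkeeping should yield the Pell-type recurrence $\#\mathrm{vert}(\mathcal{P}_d)=2\,\#\mathrm{vert}(\mathcal{P}_{d-1})+\#\mathrm{vert}(\mathcal{P}_{d-2})$, cf.\ \eqref{eq:Pellnumber}. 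Combined with $\#\mathrm{vert}(\mathcal{P}_1)=2=n_2$ and $\#\mathrm{vert}(\mathcal{P}_2)=5=n_3$, this gives $n_{d+1}$ maximal cones.

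The main obstacle I anticipate is the simpliciality step: because consecutive triangles $Q_j$ and $Q_{j+1}$ share the ray $e_{j+1}$ but contribute distinct diagonal rays $e_j-e_{j+1}$ and $e_{j+1}-e_{j+2}$, one must carefully rule out degenerate configurations in the common refinement where unexpectedly many rays appear together in a single maximal cone. Handling this bookkeeping is also what provides the structural input for organizing the three cases in the Pell recurrence, so the two steps are naturally intertwined.
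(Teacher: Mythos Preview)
Your outline is sound and your identification of the rays (including the inward edge normals $-e_j$, $e_{j+1}$, $e_j-e_{j+1}$ of $Q_j$) is correct. The paper, however, organizes the Minkowski decomposition recursively rather than all at once: writing $\mathcal{P}_d = \mathcal{P}_{d-1} + P_d + Q_{d-1}$ gives $\Sigma_d = (\Sigma_{d-1}\times\Sigma_{P_d})\wedge\Sigma_{Q_{d-1}}$. Because two of the three rays of $\Sigma_{Q_{d-1}}$ (namely $-e_{d-1}$ and $e_d$) already lie in $\Sigma_{d-1}\times\Sigma_{P_d}$, this last refinement is nothing more than the stellar subdivision of the simplicial fan $\Sigma_{d-1}\times\Sigma_{P_d}$ at the single new ray $e_{d-1}-e_d\in\cone(e_{d-1},-e_d)$. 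That one observation dissolves the obstacle you flagged: a stellar subdivision of a simplicial fan is again simplicial, and the interaction between consecutive triangles $Q_j,Q_{j+1}$ that worried you is absorbed into the inductive hypothesis on $\Sigma_{d-1}$. The same picture yields the cone count directly---$\Sigma_{d-1}\times\Sigma_{P_d}$ has $2n_d$ maximal cones and the subdivision adds one extra for each maximal cone containing $\cone(e_{d-1},-e_d)$, of which there are $n_{d-1}$---so the Pell recurrence $n_{d+1}=2n_d+n_{d-1}$ drops out without your three-case vertex bookkeeping. Your dual version (splitting vertices by the chosen vertex of $Q_{d-1}$) would reach the same recurrence, but the fan-side induction also certifies for free that no unexpected rays are created in the common refinement, a point your facet-by-facet treatment of the full decomposition would still need to address separately.
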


\begin{figure}[t]
\centering
\begin{minipage}[h]{0.5\textwidth}
\centering
\includegraphics[scale=0.55]{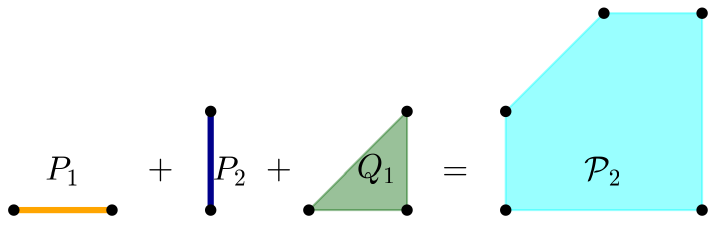}

\vspace{25 pt}
{\small (a)}
\end{minipage}
\hspace{20 pt}
\begin{minipage}[h]{0.3\textwidth}
\centering
\includegraphics[scale=0.3]{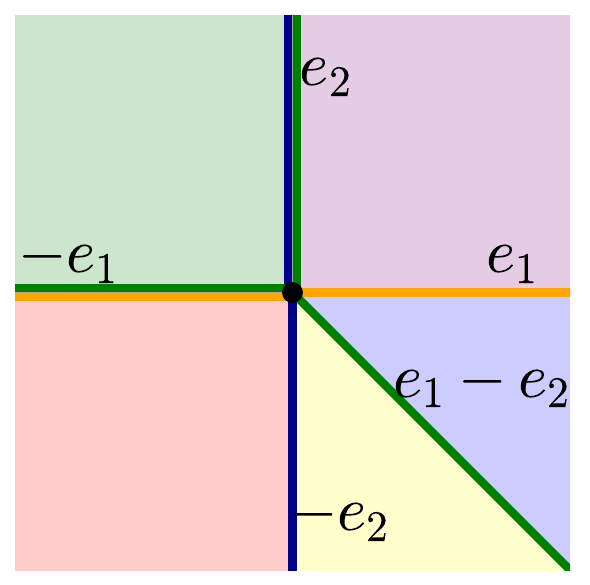}

{\small (b)}
\end{minipage}

\caption{{\small  (a) The pellytope $\mathcal{P}_2$. (b) The inner normal fan of $\mathcal{P}_2$, which is the common refinement of the inner normal fans of $P_1,\, P_2$ and $Q_1$. }}\label{FIG2}
\end{figure}

Before proving Proposition~\ref{Prop:PellyFan}, we recall some basic notions of polyhedral geometry -- for details see \cite{Ziegler:book}. 
Given a polytope $P \subset \mathbb{R}^n$ we denote its \emph{inner normal fan} by~$\Sigma_P$. 
To a fan $\Sigma$ one associates a simplicial complex as follows.
For a fixed order $\rho_1, \dots, \rho_k$ of the rays of $\Sigma$, we define
\[ 
\Delta(\Sigma) := \big\{ S \subset [k] \; \big\mid \;\cone(\rho_i \mid i \in S ) \in \Sigma \big\}. 
\]
For an example, we refer to Figure~\ref{FIG1}. If $\Sigma$ is a \emph{simplicial fan}, that is, each cone $C \in \Sigma$ is generated by linearly independent vectors, then $\Delta(\Sigma)$ is a flag complex.

Denote the {Minkowski sum} of two polytopes $P,Q \subset \mathbb{R}^n$ by $P+Q$; its inner normal fan is the \emph{common refinement} $\Sigma_P\wedge \Sigma_Q$ of $\Sigma_P$ and $\Sigma_Q$, that is,
\begin{align}
\label{Eq:MinkowskiRefinement}
\Sigma_{P+Q} = \Sigma_P \wedge \Sigma_Q = \big\{ C \cap C' \mid C \in \Sigma_P, \, C' \in \Sigma_Q\big\}. 
 \end{align}

For $P \subset \mathbb{R}^n$ (resp. $Q \subset \mathbb{R}^{m}$) denote $\iota_n (P)$ (resp. $\iota_m (Q)$) the inclusion of $P$ (resp. $Q$) into the first $n$ (resp. last $m$) coordinate hyperplanes of $\mathbb{R}^{n+m}$. A simple computation shows that 
\begin{align}\label{Eq:DirectProd}
\Sigma_{ \iota_n (P)  + \iota_m (Q)} = \Sigma_P\times \Sigma_Q =  \big\{ C \times C' \; \big \mid \; C \in  \Sigma_P, \, C' \in \Sigma_Q\big\}.
\end{align}

\begin{proof}[Proof of Proposition~\ref{Prop:PellyFan}]
  By definition, the pellytope $\mathcal{P}_d$ is the Minkowski sum $\mathcal{P}_{d-1} + P_d +  Q_{d-1}$.  From \eqref{Eq:MinkowskiRefinement} and \eqref{Eq:DirectProd} it follows that 
  \[
  \Sigma_d = \big( \Sigma_{d-1} \times \Sigma_{P_d} \big) \wedge \Sigma_{Q_{d-1}}.
  \] 
    The fan $\Sigma_{P_d}$ has two rays, generated by $e_d,-e_d$.
    The rays of $\Sigma_{Q_{d-1}}$ are generated by $e_d,-e_{d-1},e_{d-1} - e_d$. Moreover, $\Sigma_{Q_{d-1}}$  has three $2$-dimensional cones $\cone(e_d,-e_{d-1})$, $\cone(e_d,e_{d-1} - e_d)$, $ \cone(-e_{d-1},e_{d-1} - e_d)$ and a lineality space of dimension $d-2$.

Using \eqref{Eq:DirectProd}, the fan  $\Sigma_{d-1}\times \Sigma_{P_d}$ is straightforward  to compute. 
One adds to every cone in $\Sigma_{d-1}$ either a ray $\cone(e_d), \cone(-e_{d})$ or $0$. 
All rays but $\cone(e_{d-1}-e_d)$ of $\Sigma_{Q_{d-1}}$ are also rays of $\Sigma_{d-1} \times \Sigma_{P_d}$.
Thus, $\Sigma_d$ is the refinement of $\Sigma_{d-1} \times \Sigma_{P_d}$ by adding the ray generated by $e_{d-1}-e_d$.

Using the above observations, we complete the proof by induction on $d$. 
For $d=1,2$, the normal fan $\Sigma_1$ (resp. $\Sigma_2$) is simplicial and has $2$ (resp. $5$) rays and maximal cones (see Figure~\ref{FIG2}(b) for $d=2$). 
Assume that the statement of the proposition is true for $k \leq d-1$ for some $d \geq 3$. Since $\Sigma_{d-1}$ is simplicial, the fan $\Sigma_{d-1} \times \Sigma_{P_d}$ is also simplicial and has twice as many maximal cones as $\Sigma_{d-1}$. 

 The ray $\cone(e_{d-1}-e_d)$ is contained in $\cone(e_{d-1},-e_d) \in \Sigma_{d-1}\times \Sigma_{P_d}$ ,which implies that $\Sigma_d$ is a simplicial fan.   Since $\Sigma_d$ is the refinement of $\Sigma_{d-1}\times \Sigma_{P_d}$ by adding the ray generated by $e_{d-1}-e_d$, it follows that $\Sigma_d$ has $3(d-1)+2+1=3d-1$ rays.
 Every maximal cone of $\Sigma_{d-1}\times \Sigma_{P_d}$  containing $e_{d-1} -e_d$ contributes two maximal cones of $\Sigma_d$ (so one extra). 
 The number of  maximal cones of $\Sigma_{d-1}\times \Sigma_{P_d}$  containing $e_{d-1} -e_d$ is the number of $(d-1)$-dimensional cones in $\Sigma_{d-1}$ that contain $e_{d-1}$ which is the number of maximal cones in $\Sigma_{d-1}$. Thus,  $\Sigma_d$ has $2n_{d} + n_{d-1}$ maximal cones.
\end{proof}

\begin{cor}
\label{Cor:PellVertex}
    For each $d \in \mathbb{N}$, the pellytope $\mathcal{P}_d$ is a simple polytope with $3d-1$ facets and $n_{d+1}$ vertices.
\end{cor}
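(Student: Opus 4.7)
The plan is to derive Corollary~\ref{Cor:PellVertex} as an immediate consequence of Proposition~\ref{Prop:PellyFan} via the standard dictionary between a polytope and its inner normal fan. Recall that for any polytope $P \subset \mathbb{R}^d$ there are inclusion-reversing bijections between the $k$-dimensional faces of $P$ and the $(d-k)$-dimensional cones of $\Sigma_P$. In particular, vertices of $P$ correspond to maximal ($d$-dimensional) cones of $\Sigma_P$, and facets of $P$ correspond to rays of $\Sigma_P$. Moreover, $P$ is \emph{simple} precisely when every vertex lies on exactly $d$ facets, which is equivalent to every maximal cone of $\Sigma_P$ being generated by exactly $d$ rays, i.e.\ to $\Sigma_P$ being a simplicial fan.

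Concretely, I would invoke Proposition~\ref{Prop:PellyFan}, which asserts that $\Sigma_d$ is a simplicial fan with $3d-1$ rays and $n_{d+1}$ maximal cones. Applying the above dictionary to $P = \mathcal{P}_d$ immediately gives that $\mathcal{P}_d$ is simple, has exactly $3d-1$ facets (one per ray), and exactly $n_{d+1}$ vertices (one per maximal cone).

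There is no real obstacle here: the only thing to check is that the standard polytope/normal-fan dictionary has already been set up in the preceding subsection (which it has, through the notation $\Sigma_P$ and the discussion following Proposition~\ref{Prop:PellyFan}). If desired, one can cite a textbook reference such as \cite{Ziegler:book} for the three bijections used. The combinatorial content—namely, that $\Sigma_d$ is simplicial with the correct number of rays and top-dimensional cones—has already been established inductively in the proof of Proposition~\ref{Prop:PellyFan}, so the corollary is genuinely a one-line translation.
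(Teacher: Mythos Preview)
Your proposal is correct and matches the paper's intent: the corollary is stated immediately after Proposition~\ref{Prop:PellyFan} with no proof, precisely because it follows from that proposition via the standard inclusion-reversing bijection between faces of a full-dimensional polytope and cones of its normal fan. The only minor addition you might make explicit is that $\mathcal{P}_d$ is full-dimensional in $\mathbb{R}^d$ (so that $\Sigma_d$ is complete and maximal cones are $d$-dimensional), but this is already noted in the paper.
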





We finish this section with the definition of the \emph{star} of a fan. Let $\tau$ be a cone in a fan $\Sigma\subset\mathbb{R}^n$ and denote by $L(\tau)$ the linear span of $\tau$.
The \emph{star of $\tau$} is defined as the projection of the cones in $\Sigma$ containing $\tau$ under the natural projection map $\pi\,\colon \,\mathbb{R}^n \to\mathbb{R}^n/L(\tau)$, that is, 
\begin{eqnarray}\label{eq:star}
    \Star_\Sigma(\tau) := \{ \pi(\sigma) \mid \tau \subset \sigma \in \Sigma \}.
\end{eqnarray}
This construction is similar to the link complex discussed in \S\ref{Sec:BinGeom}. In fact, $\lk_{\Delta(\Sigma)}\tau$ is combinatorially isomorphic to $\Delta(\Star_\Sigma(\tau))$. 

We conclude this section by investigating the stars of rays in $\Sigma_d$. 
Combined with Lemma~\ref{Lemma:Link}, this allows us to prove that the pellytopes define binary geometries by induction on $d$.

\begin{lemma}\label{lem:stars}
Let $\rho_v$ denote the ray in $\Sigma_d$ generated by the vector $v \in \mathbb{R}^d$, then
\[
\Star_{\Sigma_d}(\rho_v)=\begin{cases}
        \Sigma_{d-1}& \text{if } v\in \{\pm e_1,\pm e_d\},\\
                \Sigma_{i-1}\times \Sigma_{d-i} & \text{if } v \in \{\pm e_i \mid i = 2, \ldots, d-1\},\\
        \Sigma_{d-1}\times \Sigma_{1} & \text{if } v\in \{e_1-e_2,e_{d-1}-e_d\},\\
        \Sigma_{i-1}\times \Sigma_{1}\times \Sigma_{d-i-1} & \text{if } v\in \{e_i-e_{i+1}\mid i=2,\dots,d-2\}.
        \end{cases}
\]
\end{lemma}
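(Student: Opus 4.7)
The plan is to identify $\Star_{\Sigma_d}(\rho_v)$ with the inner normal fan of the face $F_v(\mathcal{P}_d) := \{x \in \mathcal{P}_d : \langle v,x\rangle = \min_{y \in \mathcal{P}_d}\langle v,y\rangle\}$, viewed in the quotient $\mathbb{R}^d/L(\rho_v)$; this is the general correspondence between the star of a ray in a normal fan and the normal fan of the dual face. Since faces of Minkowski sums split additively, $F_v(\mathcal{P}_d) = \sum_{i=1}^d F_v(P_i) + \sum_{j=1}^{d-1} F_v(Q_j)$, and the normal fan of the right-hand side is the common refinement of the normal fans of its summands by \eqref{Eq:MinkowskiRefinement}. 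The argument then reduces to a case analysis over the three types of rays $v$ listed in Proposition~\ref{Prop:PellyFan}.

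The tabulation of face contributions is elementary: $F_v(P_i)$ is $\{0\}$, $\{e_i\}$, or $P_i$ according to the sign of $\langle v,e_i\rangle$; and $F_v(Q_j)$ is either a vertex, a coordinate segment ($P_j$ or $e_j+P_{j+1}$), all of $Q_j$, or---only when $v=e_j-e_{j+1}$---the diagonal segment $\mathrm{Conv}\{0,e_j+e_{j+1}\}$. For $v=\pm e_i$ the summands with indices far from $i$ are unaffected, while the summands $P_i$, $Q_{i-1}$ and $Q_i$ collapse to points or to copies of the coordinate segments $P_{i\pm 1}$. The remaining Minkowski sum splits, in the sense of \eqref{Eq:DirectProd}, into a piece supported on coordinates $1,\dots,i-1$ and a piece supported on coordinates $i+1,\dots,d$; using that duplicating a summand $P_{i\pm 1}$ leaves the normal fan unchanged (its rays $\pm e_{i\pm 1}$ already belong to the smaller pellytope fan), each piece is recognized as a pellytope of the appropriate dimension, yielding $\Sigma_{i-1}\times\Sigma_{d-i}$, with the outer factor degenerating to the trivial fan when $i\in\{1,d\}$.

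The diagonal case $v=e_i-e_{i+1}$ is the most delicate. The outside summands again assemble into smaller pellytopes on coordinates $1,\dots,i-1$ and $i+2,\dots,d$, but the central contribution $F_v(Q_i)=\mathrm{Conv}\{0,e_i+e_{i+1}\}$ is a segment not parallel to any coordinate axis. The key point is that in the quotient $\mathbb{R}^d/\mathbb{R}(e_i-e_{i+1})$, the vector $e_i+e_{i+1}$ survives and is proportional to the common class $\overline{e_i}=\overline{e_{i+1}}$, so this segment becomes a coordinate segment in the quotient and contributes a factor of $\Sigma_1$ in the middle. Combined with the outer pellytope fans this yields $\Sigma_{i-1}\times\Sigma_1\times\Sigma_{d-i-1}$, with the outer factors degenerating at $i\in\{1,d-1\}$. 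The main obstacle is careful bookkeeping: identifying, case by case, which Minkowski summands collapse to which lower-dimensional pieces, and then verifying that the remainder decomposes as a product of smaller pellytopes (times a single segment, in the diagonal case) in the disjoint-coordinate sense of \eqref{Eq:DirectProd}.
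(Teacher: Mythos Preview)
Your argument is correct and proceeds by a genuinely different route than the paper's own proof. The paper works directly on the fan side: for each $v$ it simply tracks the images of the ray generators of $\Sigma_d$ modulo $L(\rho_v)$ (e.g.\ for $v=e_1$ one has $\pm e_1\equiv 0$ and $e_1-e_2\equiv -e_2$), and then recognises the resulting configuration of rays as that of the stated product of smaller pellytope fans, relying implicitly on the recursive description of $\Sigma_d$ from Proposition~\ref{Prop:PellyFan} for the cone structure. You instead work on the polytope side, using the duality $\Star_{\Sigma_P}(\rho_v)=\Sigma_{F_v(P)}$ together with the additivity $F_v(A+B)=F_v(A)+F_v(B)$, and then identify $F_v(\mathcal P_d)$ (up to translation) as a Minkowski sum of pieces supported on disjoint coordinate blocks in the quotient, so that \eqref{Eq:DirectProd} applies. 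Your handling of the diagonal case is exactly right: the segment $\Conv\{0,e_i+e_{i+1}\}$ is the only summand not lying in a coordinate subspace, but its image in $\mathbb{R}^d/\mathbb{R}(e_i-e_{i+1})$ is a segment in the direction $\bar e_i=\bar e_{i+1}$, disjoint from the other blocks, producing the middle $\Sigma_1$ factor.

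What each approach buys: the paper's proof is shorter because the ray generators are already at hand from Proposition~\ref{Prop:PellyFan}, so one only reduces them modulo a line. Your approach is more systematic: it does not require first knowing the compatibility relations among rays, it certifies the full cone structure (not merely the rays) automatically via the normal-fan identification, and it transports verbatim to any polytope given as a Minkowski sum of explicit small pieces. One incidental observation: your computation yields $\Sigma_1\times\Sigma_{d-2}$ for $v\in\{e_1-e_2,\,e_{d-1}-e_d\}$ (the degenerate endpoints of the formula $\Sigma_{i-1}\times\Sigma_1\times\Sigma_{d-i-1}$), which is the correct $(d-1)$-dimensional answer; the displayed $\Sigma_{d-1}\times\Sigma_1$ in the lemma statement for these two rays appears to be a typographical slip for $\Sigma_{d-2}\times\Sigma_1$, and both proofs in fact establish the latter.
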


\begin{proof}
First, we consider the case $v = e_1$. Modulo the linear space $L(e_1)$ spanned by $e_1$, the rays $\rho_{e_1}$ and $\rho_{-e_1}$ become zero and $-e_2 \equiv e_1 - e_2 \mod L(e_1)$. 
It follows from this that $ \Star_{\Sigma_d}(\rho_{e_1})= \Sigma_{d-1}$. 
For $v = e_i$ where $i =2,\dots,d-1$, we have $e_i \equiv -e_i \equiv 0 \mod L(e_i)$, $ e_{i-1} - e_i \equiv e_{i-1} \mod L(e_i)$ and $e_{i} - e_{i+1} \equiv -e_{i+1} \mod L(e_i)$. Thus, $\Star_{\Sigma_d}(\rho_{e_i}) = \Sigma_{i-1}\times \Sigma_{d-i}$. The other cases follow by similar arguments.
\end{proof}





    

\section{Pellspace}\label{Sec:PellSpace}

In this section, we introduce the Pellspace, compute its character lattice, and show that it forms a binary geometry for the simplicial complex defined by the inner normal fan of the pellytope. 
Throughout this section, we closely follow the notation from \cite{Lam::LectureNotes}.

\subsection{The character lattice and bounded characters}\label{Sec:Characters}

For $i\in [d],\; j\in [d-1]$, consider the polynomials
\begin{align*}
    p_i := 1 + y_i \quad \text{and} \quad  q_j &:= 1 + y_j + y_j y_{j+1} \quad \text{in} \quad \mathbb C[y_1,\dots,y_d].
\end{align*}
The Newton polytope of $p_i$ (resp. $q_j$) equals $P_i$ (resp. $Q_j$) from \eqref{Eq:PiQi}. 
We define the \emph{open Pellspace} as the very affine variety in $(\mathbb C^*)^d$ complement to the vanishing set of the $p_i$'s and $q_j$'s:
\begin{align*}
\mathcal U_{d} := \big\{ x \in (\mathbb{C}^*)^{d} \; \big\mid \; p_i(x) \neq 0 \; \forall i\in [d] \text{ and } q_j(x) \neq 0 \; \forall j\in[d-1] \big\}
\end{align*}
Since the $p_i$'s and $q_j$'s only have positive coefficients, the positive real orthant $\mathbb{R}^d_{>0}$ is a connected component of $\mathcal{U}_d$. 
By \cite[Lemma 5.2]{Lam::LectureNotes}, the character lattice $\Lambda$ of $\mathcal{U}_d$ is  the lattice of Laurent monomials in  $y_1, \dots, y_n, \, p_1, \dots, p_d, \, q_1, \dots, q_{d-1}$. 
In particular, every element of $\Lambda$ can be written uniquely as
\begin{align}
\label{Eq:LatticeElem}
   y^a p^b q^c   = \prod_{i=1}^d y_i^{a_i} \prod_{i=1}^d p_i^{b_i} \prod_{i=1}^{d-1} q_i^{c_i}, \quad (a,b,c) \in \mathbb{Z}^d \times  \mathbb{Z}^d \times  \mathbb{Z}^{d-1}.
\end{align}
A character $y^a p^b q^c \in \Lambda $ is \emph{bounded} if it takes bounded values on $\mathbb{R}^d_{>0}$. We denote by $\Gamma \subset \Lambda$ the semigroup of bounded characters. In what follows, we show that the minimal generators of $\Gamma$ form a basis of $\Lambda$.

To find the minimal generators of $\Gamma$, we recall the method from \cite{AHL::Stringy}.
The \emph{tropicalization} of rational functions in $y_1, \dots , y_d$ is defined by
\[ y_i \mapsto Y_i, \quad + \mapsto \min, \quad \times \mapsto +, \quad \div \mapsto -.\]
For example, the tropicalization of the monomial $y^a p^b q^c$ from \eqref{Eq:LatticeElem} equals
\begin{align}
\trop( y^a p^b q^c )(Y)  
  &= \sum_{i=1}^d a_i Y_i + \sum_{i=1}^d b_i \min\{0,Y_i\} + \sum_{i=1}^{d-1} c_i \min\{0,Y_i, Y_i + Y_{i+1} \}. \nonumber 
\end{align}
By \cite[Lemma 5.16]{Lam::LectureNotes} 
we have 
\begin{equation}
\label{Eq:DescripGamma}
\begin{aligned}
 \Gamma = \big\{ y^a p^b q^c \in \Lambda \; \big\mid\; 
 \trop(y^a p^b q^c)(Y) \geq 0 \quad  \forall  Y \in \mathbb{R}^d \big\}.
\end{aligned}
\end{equation}

A simple computation shows that 
    $\trop(y^a p^b q^c)(Y) =  0$ for all $Y \in \mathbb{R}^d$  if and only if $(a,b,c) = 0$. Thus, if  $y^a p^b q^c \neq 1$, the inequality  $\trop(y^a p^b q^c)(Y_*) \geq 0$ is strict for at least one $Y_* \in \mathbb{R}^d$.
Since $\trop(y^a p^b q^c)(Y)$ is a piecewise linear function on $\mathbb{R}^d$ and linear on each cone of $\Sigma_d$, it is enough to check whether $\trop(y^a p^b q^c)(v) \geq 0$ for generators $v$ of the rays in $\Sigma_d$.

\begin{lemma}
\label{Lemma:PrimitiveEnough}
Let $V$ be a set of generators of the rays in $\Sigma_d$. The following are equivalent
    \begin{enumerate}[(i)]
        \item $\trop(y^a p^b q^c)(Y) \geq  0$ for all $Y \in \mathbb{R}^d$ and $\trop(y^a p^b q^c)(Y_*) >  0$ for at least one $Y_* \in \mathbb{R}^d$,
        \item $\trop(y^a p^b q^c)(v) \geq  0$ for all $v \in V$ and $\trop(y^a p^b q^c)(v_*) >  0$ for at least one $v_* \in V$.
    \end{enumerate}
\end{lemma}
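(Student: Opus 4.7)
The plan is to reduce the global condition over $\mathbb{R}^d$ to one on the finite set $V$ by exploiting that $\trop(y^a p^b q^c)$ is linear on each maximal cone of $\Sigma_d$. Since $\trop(p_i)(Y) = \min\{0,Y_i\}$ and $\trop(q_j)(Y) = \min\{0,Y_j,Y_j+Y_{j+1}\}$ are each linear on the maximal cones of the inner normal fans $\Sigma_{P_i}$ and $\Sigma_{Q_j}$, and $\Sigma_d$ refines all of these (being the normal fan of the Minkowski sum, by \eqref{Eq:MinkowskiRefinement}), the tropicalization $\trop(y^a p^b q^c)$ is a genuine linear form on every maximal cone of $\Sigma_d$. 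By Proposition~\ref{Prop:PellyFan}, $\Sigma_d$ is moreover a complete simplicial fan whose ray generators are (positive multiples of) the vectors in $V$.

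For the implication (ii) $\Rightarrow$ (i), given any $Y \in \mathbb{R}^d$, completeness produces a maximal cone $\sigma \in \Sigma_d$ containing $Y$, and simpliciality produces ray generators $v_1,\dots,v_d \in V$ of $\sigma$ together with scalars $\lambda_1,\dots,\lambda_d \geq 0$ satisfying $Y = \sum_i \lambda_i v_i$. Linearity of $\trop(y^a p^b q^c)$ on $\sigma$ then yields
\[
\trop(y^a p^b q^c)(Y) \;=\; \sum_{i=1}^d \lambda_i\, \trop(y^a p^b q^c)(v_i) \;\geq\; 0,
\]
and the strictness clause in (i) is witnessed immediately by $Y_* := v_*$. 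Conversely, for (i) $\Rightarrow$ (ii), non-negativity on $V$ is trivial since $V \subset \mathbb{R}^d$; for strictness, a short contradiction argument suffices: if $\trop(y^a p^b q^c)$ vanished on every element of $V$, then on each maximal cone it would be a linear form vanishing on a basis, hence identically zero, and therefore identically zero on all of $\mathbb{R}^d$ by completeness, contradicting the strict positivity at $Y_*$ from (i).

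The main technical point is verifying that $\trop(y^a p^b q^c)$ is genuinely linear (rather than merely concave piecewise linear) on each maximal cone of $\Sigma_d$; this follows from the refinement property of Minkowski sums recorded in \eqref{Eq:MinkowskiRefinement}, together with the fact that tropicalization converts products into sums of piecewise linear functions. Everything else is linear algebra on a complete simplicial fan, and I do not anticipate any further obstacle.
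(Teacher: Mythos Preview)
Your proposal is correct and follows essentially the same approach as the paper: both directions exploit that $\trop(y^a p^b q^c)$ is linear on each cone of the complete fan $\Sigma_d$, write an arbitrary $Y$ as a non-negative combination of ray generators, and deduce the inequalities from those on $V$. Your contradiction argument for the strictness in (i) $\Rightarrow$ (ii) is phrased globally (vanishing on all of $V$ forces vanishing everywhere), whereas the paper localizes to the single cone containing $Y_*$, but this is only a cosmetic difference.
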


\begin{proof} 
\textit{(i) $\Leftarrow$ (ii).} Let $Y \in \mathbb{R}^d$.
Since  $\Sigma_d$ is a complete fan, there exists a cone $C \in \Sigma_d$ (not necessarily full dimensional) with $Y \in C$. 
Let $v_1, \dots , v_{s} \in V$ be the generators of $C$ and let $\lambda_1, \dots , \lambda_{s} \geq 0$ such that $\sum_{i=1}^{s} \lambda_i v_i = Y$. Since $\trop(y^a p^b q^c)$ is linear on $C$ it 
follows that
\begin{align*}
 \trop(y^a p^b q^c)(Y) = \trop(y^a p^b q^c)\left(\sum_{i=1}^{s} \lambda_i v_i\right) = \sum_{i=1}^{s} \lambda_i  \underbrace{\trop(y^a p^b q^c)( v_i)}_{\geq 0} \geq 0.
 \end{align*}
The second part of (i) follows by taking $Y_* = v_*$.

\textit{(i) $\Rightarrow$ (ii).} Assumption (i) implies $\trop(y^a p^b q^c)(v) \geq  0$ for all $v \in V$. It therefore suffices to find one $v_* \in V$ such that $\trop(y^a p^b q^c)(v_*) >  0$. By assumption, there exists $Y_* \in \mathbb{R}^d$ such that $\trop(y^a p^b q^c)(Y_*) >  0$. Again, we write $Y_* = \sum_{i=1}^{s} \lambda_i v_i$ for some $v_1, \dots , v_s \in V$, $\lambda_1, \dots , \lambda_s \geq 0$. If $\trop(y^a p^b q^c)(v_i) =  0$ for all $i = 1, \dots ,s$, then $\trop(y^a p^b q^c)(Y_*) =  0$, which is a contradiction. Thus, there exists $v_* \in \{v_1, \dots ,v_s\}$ with $\trop(y^a p^b q^c)(v_*) >  0$.
\end{proof}


Now let $V= (e_1, \dots , e_d, -e_1, \dots -e_d, e_1 - e_2, \dots ,e_{d-1} - e_d)$ be the tuple of ray generators for $\Sigma_d$.
Consider the real matrix $M_d=(m_{ij})_{i,j\in[3d-1]}$ defined~by
\begin{equation}\label{Eq::Mmatrix}
    m_{ij} = \trop(F_i)(V_j),
\end{equation}
where $(F_1, \dots, F_{3d-1}) = (y_1, \dots ,y_d, p_1, \dots, p_d, q_1, \dots , q_{d-1})$.

\begin{exa} Consider the case $d=2$. We have
\begin{align*}
    \trop(F_5)(e_1-e_2) = \trop(1+y_1+y_1y_2)(1,-1) = \min \{ 0,1,1-1\} = 0.
\end{align*}
Thus, the entry in the bottom left corner of $M_2$ equals $0$. We compute the remaining entries of $M_2$ using the same approach and obtain
\begin{equation}
M_2=\left(\begin{smallmatrix}
    1  & 0 & -1  & 0 & 1\\0  & 1 & 0  & -1 & -1\\ 0  & 0 & -1  & 0 & 0\\ 0  & 0 & 0  & -1 & -1\\  0  & 0 & -1  & -1  & 0
\end{smallmatrix}\right) \in \mathbb{R}^{5\times 5}.   
\end{equation}
\end{exa}

Combining \eqref{Eq:DescripGamma} and Lemma~\ref{Lemma:PrimitiveEnough}, it follows that the bounded characters on $\mathcal{U}_d$ are given by
\begin{align}
\label{Eq:GammawithM}
     \Gamma &= \big\{ y^a p^b q^c \in \Lambda \;\big \mid\;  (a,b,c) M_d	\geq 0 \Big\}.
\end{align}
Here we use the notation $v \geq 0$ for $v \in \mathbb{R}^n$ to indicate that each entry of the vector $v$ is non-negative.

\begin{lemma}\label{Lemma:M invertible}
The matrix $M_d$ from~\eqref{Eq::Mmatrix} is invertible over $\mathbb{Z}$. The rows of the inverse matrix $M_d^{-1}$ are 
\begin{eqnarray*}
&\beta_i = e_i + e_{d+i+1} -e_{2d+i }, \qquad  
    \beta_d = e_{d} - e_{2d} , \qquad
     \beta_{d+1} = -e_{d+1}, &\\
&     \beta_{d+j}  = e_{d+j-1} - e_{2d+j-1} \qquad
     \beta_{2d+i} = - e_{d+i} - e_{d+i+1} + e_{2d+i}.  &
\end{eqnarray*}
for $i=1,\dots,d-1$ and $j=2,\dots,d$.
\end{lemma}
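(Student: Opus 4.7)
The plan is to verify directly that the matrix $B$ whose $k$-th row is $\beta_k$ satisfies $B M_d = I_{3d-1}$; since $B$ has integer entries, this immediately yields both invertibility of $M_d$ over $\mathbb{Z}$ and the claimed formula $M_d^{-1} = B$.

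The key simplification is tropical. Because $\trop$ sends products of the $F_i$'s and their inverses to sums, the $(k,j)$-entry of $B M_d$ equals
\begin{equation*}
\sum_{i=1}^{3d-1} (\beta_k)_i \trop(F_i)(V_j) = \trop(\chi_k)(V_j),
\end{equation*}
where $\chi_k := \prod_i F_i^{(\beta_k)_i}$ is the character of $\mathcal{U}_d$ corresponding to $\beta_k$. Thus the lemma reduces to checking, for each $k \in [3d-1]$, the single tropical identity $\trop(\chi_k)(V_j) = \delta_{kj}$ on the explicit list of ray generators $V_j \in \{\pm e_r, e_r - e_{r+1}\}$.

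Reading off the formulas for $\beta_k$, the corresponding characters are
\begin{align*}
\chi_i &= y_i p_{i+1}/q_i \text{ for } i = 1, \ldots, d-1, \\
\chi_d &= y_d/p_d, \\
\chi_{d+1} &= 1/p_1, \\
\chi_{d+j} &= p_{j-1}/q_{j-1} \text{ for } j = 2, \ldots, d, \\
\chi_{2d+i} &= q_i/(p_i p_{i+1}) \text{ for } i = 1, \ldots, d-1.
\end{align*}
Each $\trop(\chi_k)$ depends on only two or three of the coordinates $Y_r$, so its value at a ray generator $V_j$ can be nonzero only when $V_j$ involves one of those coordinates. This localizes the verification: for $\chi_{2d+i}$, say, one only needs to check the handful of rays $\pm e_i, \pm e_{i+1}, e_{i-1}-e_i, e_i-e_{i+1}, e_{i+1}-e_{i+2}$, and each value follows from the elementary formulas $\trop(y_r)(Y) = Y_r$, $\trop(p_r)(Y) = \min(0, Y_r)$, and $\trop(q_j)(Y) = \min(0, Y_j, Y_j + Y_{j+1})$.

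The main obstacle is purely organizational bookkeeping: tracking the boundary indices ($i = 1$, $i = d-1$, $j = d$, and so on) at which one of the adjacent difference rays $e_r - e_{r+1}$ is not present, and ensuring the localized candidate list of rays is complete in those edge cases. No individual check is difficult --- each reduces to evaluating a minimum of two or three explicit integers --- but the full enumeration covers five families of $\beta_k$'s against three types of ray generators, and one must confirm in each case that the tropical minimum is attained by the expected monomial.
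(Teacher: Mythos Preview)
Your proposal is correct and, at bottom, is the same direct verification that the paper gives: both arguments check $BM_d=I_{3d-1}$ entry by entry. The paper writes the columns of $M_d$ explicitly as integer vectors and then declares the product to be the identity by ``direct computation''; you instead recognise the $(k,j)$-entry of $BM_d$ as $\trop(\chi_k)(V_j)$ for the Laurent monomial $\chi_k=\prod F_i^{(\beta_k)_i}$, which lets you localise each row of the check to the two or three coordinates that $\chi_k$ actually involves. This tropical repackaging is a mild organisational improvement over the paper's bare matrix product, and it has the pleasant side effect that your list of $\chi_k$'s is exactly the list of $u_k$'s appearing in Corollary~\ref{cor:ui monomials}, so your computation simultaneously previews that result. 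The substance of the two proofs is identical.
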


\begin{proof}
Let $A_1, \dots , A_d, B_1, \dots, B_d, C_1, \dots , C_{d-1}$ denote the columns of $M_d$. Explicitly we have $A_i=e_i$ for $i\in [d]$, 
\[
B_1 = -e_{1} - e_{d+1} - e_{2d+1}, \quad  B_i = -e_{i} - e_{d+i} - e_{2d+i-1} - e_{2d+i}, \quad B_d = -e_{d} - e_{2d} - e_{3d-1},
\]
for $i=2,\dots,d-1$, and 
\[
C_i = e_i - e_{i+1} -e_{d+i+1} - e_{2d+i+1}, \qquad \ C_{d-1} = e_{d-1} - e_{d} -e_{2d}.
\]
for $i=[d-2]$.
A direct computation shows that $M_d^{-1} M_d$ is the identity matrix.
\end{proof}
For example, when $d=2$ we have
\[
M_2^{-1}=\left(\begin{smallmatrix}
     1  & 0 & 0  & 1 & -1\\0  & 1 & 0  & -1 & 0\\ 0  & 0 & -1  & 0 & 0\\ 0  & 0 & 1  & 0 & -1\\  0  & 0 & -1  & -1  & 1
\end{smallmatrix}\right).
\]

\begin{prop}\label{Prop:Gen_ui}
Let $\beta_i = (a_i,b_i,c_i), \, i = 1, \dots 3d-1$ denote the rows of $M_d^{-1}$. 
The elements
\[ 
u_i = z^{\beta_i} =  y^{a_i} p^{b_i} q^{c_i} = \prod_{j=1}^d y_j^{a_{ij}} \prod_{j=1}^d p_j^{b_{ij}}  \prod_{j=1}^{d-1} q_j^{c_{ij}}  
\]
are the minimal generators of $\Gamma$. 
Moreover, $u_1, \dots , u_{3d-1}$ is a basis of the character lattice $\Lambda$.
\end{prop}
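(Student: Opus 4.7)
The plan is to exploit the key fact from Lemma~\ref{Lemma:M invertible} that $M_d \in GL_{3d-1}(\mathbb{Z})$, which immediately forces $\{\beta_1,\dots,\beta_{3d-1}\}$ to be a $\mathbb{Z}$-basis of $\Lambda \cong \mathbb{Z}^{3d-1}$. This settles the ``basis'' part of the statement for free. Everything else reduces to a change-of-basis interpretation of the polyhedral description \eqref{Eq:GammawithM}.

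First I would unpack what $(a,b,c)M_d \geq 0$ means in the basis $\{\beta_i\}$. Since $\beta_i$ is the $i$-th row of $M_d^{-1}$, we have $\beta_i M_d = e_i$. Writing an arbitrary $(a,b,c) \in \Lambda$ uniquely as $(a,b,c) = \sum_{i=1}^{3d-1} \lambda_i \beta_i$ with $\lambda_i \in \mathbb{Z}$, the defining inequality becomes
\begin{equation*}
(a,b,c)M_d \;=\; \sum_{i=1}^{3d-1} \lambda_i\, \beta_i M_d \;=\; \sum_{i=1}^{3d-1} \lambda_i\, e_i \;=\; (\lambda_1,\dots,\lambda_{3d-1}).
\end{equation*}
Hence by \eqref{Eq:GammawithM}, the element $z^{(a,b,c)} \in \Gamma$ if and only if every $\lambda_i \geq 0$. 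In particular, each $u_i = z^{\beta_i}$ lies in $\Gamma$ (take $\lambda_j = \delta_{ij}$), and every element of $\Gamma$ is an $\mathbb{N}$-linear combination of the $u_i$, so they generate $\Gamma$ as a semigroup.

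For minimality, I would argue that any generating set of $\Gamma$ must contain each $u_i$. If $u_i = u_{j_1}^{c_1}\cdots u_{j_k}^{c_k}$ for some nonnegative integers $c_\ell$ with all $j_\ell \neq i$, then translating to exponent vectors gives $\beta_i = \sum_\ell c_\ell \beta_{j_\ell}$, contradicting the $\mathbb{Z}$-linear independence of the $\beta_i$ (unless some $c_\ell < 0$, which is not allowed in $\Gamma$). Thus the $u_i$ are precisely the minimal generators.

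I do not expect any real obstacle: the whole proposition is a direct consequence of Lemma~\ref{Lemma:M invertible} together with the polyhedral identity \eqref{Eq:GammawithM}. The only point that requires a moment of care is the minimality argument, where one must use that the $\beta_i$ are a $\mathbb{Z}$-basis (not merely a $\mathbb{Q}$-basis), so that no relation of the form $\beta_i = \sum_{j\neq i} c_j \beta_j$ with $c_j \in \mathbb{N}$ can exist.
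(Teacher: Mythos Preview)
Your proof is correct and follows essentially the same approach as the paper: both rely on the identity $\beta_i M_d = e_i$ together with the invertibility of $M_d$ over $\mathbb{Z}$ to identify $\Gamma$ with the free monoid $\mathbb{N}^{3d-1}$ in the $u_i$-basis. The only cosmetic difference is in the minimality step: the paper shows each $u_i$ is irreducible in $\Gamma$ (if $u_i = w_1 w_2$ with $w_j \in \Gamma$ then $\alpha_1 M_d + \alpha_2 M_d = e_i$ forces one factor to be trivial), whereas you show no $u_i$ lies in the subsemigroup generated by the others --- both arguments boil down to the fact that $e_i$ admits no nontrivial decomposition in $\mathbb{N}^{3d-1}$.
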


\begin{proof}
Since $\beta_i M_d = e_i \geq  0$, it follows from~\eqref{Eq:GammawithM} that $u_i = z^{\beta_i} \in \Gamma$. 
We show that each $u_i$ is a minimal element of $\Gamma$ by contradiction. 
Assume that there exists $w_1, w_2 \in \Gamma$ such that $u_i = w_1 w_2$. Let $\alpha_1, \alpha_2 \in \mathbb{Z}^{3d-1}$ such that $w_1 = z^{\alpha_1}, w_2 = z^{\alpha_2}$. By construction $\alpha_1 + \alpha_2 =  \beta_i$. 
After multiplying these vectors with the matrix $M_d$ from the right, we have
\begin{align}\label{Eq::ProofContr}
\alpha_1 M_d + \alpha_2 M_d = \beta_i M_d = e_i
\end{align}
Since $w_1, w_2 \in \Gamma$, we also have that $\alpha_1 M_d \geq 0$ and $\alpha_2M_d \geq 0$. 
If there exists $k \neq \ell$ such that the $k^{\text{th}}$ coordinate of $\alpha_1M_d$ and $\ell^{\text{th}}$ coordinate of $\alpha_2M_d$ are both nonzero, then $\alpha_1 M_d + \alpha_2 M_d = e_i$ has at least two non-zero coordinates, which is a contradiction. 
If there exists $k$ such that the $k^{\text{th}}$ coordinate is positive for both $\alpha_1 M_d$ and $\alpha_2 M_d$, then the $k^{\text{th}}$ coordinate of $\alpha_1 M_d + \alpha_2 M_d = e_i$ is larger than one, which is again a contradiction. 
Thus $u_i$ cannot be written as $w_1w_2$ for $w_1,w_2 \in \Gamma$.

It remains to show that $u_1, \dots , u_{3d-1}$ generate $\Gamma$ and that they form a basis of $\Lambda$. Let $w = z^\alpha \in \Lambda$ and choose $\lambda_1, \dots, \lambda_{3d-1} \in \mathbb{Z}$ such that
\[ 
\alpha M_d = \sum_{i=1}^{3d-1} \lambda_i e_i =  \sum_{i=1}^{3d-1} \lambda_i (\beta_i M_d)  =  \big(\sum_{i=1}^{3d-1} \lambda_i \beta_i \big)M_d.  
\]
Multiplying with $(M_d)^{-1}$ from the right, we have $\alpha =  \sum_{i=1}^{3d-1} \lambda_i \beta_i$, which shows that $u_1, \dots, u_{3d-1}$ form a basis of $\Lambda$. 
If we assume additionally that $w \in \Gamma$, then $\alpha M_d \geq 0$ and we can choose $\lambda_1, \dots, \lambda_{3d-1} \in \mathbb{Z}_{\geq 0}$.
From this it follows that $u_1, \dots , u_{3d-1}$ generate $\Gamma$.
\end{proof}

\begin{cor}\label{cor:ui monomials}
The minimal generators of $\Gamma$ have the following form:
\[
u_i = \tfrac{y_ip_{i+1}}{q_i}, \quad u_d = \tfrac{y_d}{p_d}, \quad  u_{d+1} = \tfrac{1}{p_1}, \quad   u_{i+d+1} = \tfrac{p_i}{q_i}, \quad u_{i+2d} = \tfrac{q_i}{p_ip_{i+1}}
\]
for $i = 1, \dots ,d-1$. 
\end{cor}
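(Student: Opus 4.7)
The statement of Corollary~\ref{cor:ui monomials} is a direct unpacking of Proposition~\ref{Prop:Gen_ui} together with the explicit description of the rows of $M_d^{-1}$ given in Lemma~\ref{Lemma:M invertible}, so the plan is essentially a bookkeeping exercise rather than a new argument. First I would fix the convention from Section~\ref{Sec:Characters}: a character $z^\beta$ with $\beta = (a,b,c) \in \mathbb Z^d \times \mathbb Z^d \times \mathbb Z^{d-1}$ corresponds to the monomial $y^a p^b q^c$, so that the first $d$ coordinates of $\beta$ are the exponents of the $y_j$, the next $d$ coordinates are the exponents of the $p_j$, and the last $d-1$ coordinates are the exponents of the $q_j$.

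Next I would apply Proposition~\ref{Prop:Gen_ui}, which identifies $u_i$ with $z^{\beta_i}$ where $\beta_i$ is the $i$th row of $M_d^{-1}$, and then substitute the formulas for the rows from Lemma~\ref{Lemma:M invertible}. Concretely, for $i = 1,\dots,d-1$ the row $\beta_i = e_i + e_{d+i+1} - e_{2d+i}$ reads off as $u_i = y_i\, p_{i+1}\, q_i^{-1}$; the row $\beta_d = e_d - e_{2d}$ gives $u_d = y_d/p_d$; the row $\beta_{d+1} = -e_{d+1}$ gives $u_{d+1} = 1/p_1$; the row $\beta_{d+j} = e_{d+j-1} - e_{2d+j-1}$ for $j = 2,\dots,d$ becomes, after relabeling $i = j-1$, the formula $u_{d+1+i} = p_i/q_i$ for $i = 1,\dots,d-1$; and finally $\beta_{2d+i} = -e_{d+i} - e_{d+i+1} + e_{2d+i}$ gives $u_{2d+i} = q_i/(p_i p_{i+1})$.

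There is essentially no obstacle here: the only thing to check is that the shifts in the indexing on the $p$'s and $q$'s (which live in blocks of size $d$ and $d-1$ respectively, starting at positions $d+1$ and $2d+1$) are handled correctly, and that the two ranges of indices $i = 1,\dots,d-1$ and $j = 2,\dots,d$ used in Lemma~\ref{Lemma:M invertible} match the single range in the statement of the corollary. I would therefore close the proof with a short sentence pointing out that these matchings are exactly what is asserted, so no further verification is required.
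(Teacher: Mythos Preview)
Your proposal is correct and matches the paper's approach exactly: the corollary is stated without a separate proof in the paper because it is meant to be read off directly from Proposition~\ref{Prop:Gen_ui} and the explicit rows of $M_d^{-1}$ in Lemma~\ref{Lemma:M invertible}, which is precisely the bookkeeping you carry out.
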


\begin{exa}
For $d=2$, the minimal generators of $\Gamma$ are
\[
u_1 = \tfrac{y_1 p_2}{q_1}, \quad u_2 = \tfrac{y_2}{p_2},\quad u_3 = \tfrac{1}{p_1}, \quad u_4 = \tfrac{p_1}{q_1},\quad u_5 = \tfrac{q_1}{p_1p_2}.
\]
\end{exa}

\subsection{The {$u$-}equations for the Pellspace}\label{sec:u-eq for pellytope}

Given the expressions in Corollary~\ref{cor:ui monomials}, we observe that the minimal generators of $\Gamma$ satisfy the following equations:
    \begin{equation}\label{eq:equations Gamma}
    u_i = \begin{cases}
        1 - u_{d+1}u_{d+2} \quad & i=1\\
        1 -  u_{i+d} u_{i+d+1} u_{i-1 + 2d} \quad & i= 2, \ldots, d-1 \\
        1 - u_{3d-1}u_{2d} \quad & i=d\\
        1 - u_1u_{1+ 2d} \quad & i=d+1\\
        1 - u_j u_{j+1} u_{j+1 +2d} \quad & i = j + d + 1 \text{ where } 1\leq j \leq d-2\\
        1 - u_{d-1}u_{d} \quad & i = 2d \\
        1 - u_{d+1}u_2u_{2+2d} \quad & i=1+2d\\
        1 - u_{j-1+2d}u_{j+d}u_{j+1}u_{j+1+2d} \quad & i= j+2d \text{ where } 2 \leq j \leq d-2\\
        1 - u_d u_{3d-2} u_{2d-1} \quad & i = 3d-1
    \end{cases}
        \end{equation}


As expected these equations are $u$-equations for the pellytope:

\begin{lemma}\label{lem:u-eqn Gamma}
    The equations \eqref{eq:equations Gamma} satisfied by the minimal generators of $\Gamma$ are $u$-equations for $\Delta(\Sigma_d)$ as defined in \eqref{eq:u-equations}.
\end{lemma}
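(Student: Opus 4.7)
The proof has two parts: (a) verifying that the equations in \eqref{eq:equations Gamma} actually hold for the generators $u_1, \ldots, u_{3d-1}$, and (b) showing that each equation has the form of a $u$-equation for $\Delta(\Sigma_d)$; that is, for each $i$, the index set of the monomial on the right-hand side is precisely $\{j : j \nsim i\}$, with all exponents positive.

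Part (a) is a direct algebraic verification: using the explicit expressions from Corollary~\ref{cor:ui monomials}, I would substitute into each of the nine cases of \eqref{eq:equations Gamma} and check both sides match as rational functions in $y_1, \ldots, y_d$. The only nontrivial identities required are the polynomial relations
\begin{align*}
q_j - p_j = y_j y_{j+1}, \qquad p_j p_{j+1} - q_j = y_{j+1}, \qquad p_j - 1 = y_j,
\end{align*}
all of which follow immediately from the definitions $p_j = 1 + y_j$ and $q_j = 1 + y_j + y_j y_{j+1}$. For example, in the case $i = j + d + 1$ we have $1 - u_{j+d+1} = 1 - p_j/q_j = (q_j - p_j)/q_j = y_j y_{j+1}/q_j$, which matches $u_j u_{j+1} u_{j+1+2d}$ after telescoping.

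For part (b), note that every exponent in every monomial of \eqref{eq:equations Gamma} is one, so it suffices to check that the index set of each monomial equals the incompatibility set of the corresponding ray in $\Delta(\Sigma_d)$. I would determine these incompatibility sets using Lemma~\ref{lem:stars}: a ray $\rho_w$ is compatible with $\rho_v$ in $\Delta(\Sigma_d)$ if and only if the class of $w$ modulo $L(\rho_v)$ spans a ray of $\Star_{\Sigma_d}(\rho_v)$. Since the stars of rays are explicitly given there as products of smaller pellytope fans, tracking which rays of $\Sigma_d$ project to which rays of the star fan yields the compatibility relation on a case-by-case basis. For instance, if $v = e_i$ with $2 \le i \le d-1$, then $\Star_{\Sigma_d}(\rho_{e_i}) = \Sigma_{i-1} \times \Sigma_{d-i}$, and the rays \emph{not} projecting onto a ray of this product are precisely those in $\{-e_i, -e_{i+1}, e_{i-1} - e_i\}$, matching the three factors of the monomial $u_{i+d} u_{i+d+1} u_{i-1+2d}$ in the case $i = 2, \ldots, d-1$.

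The main obstacle is the careful bookkeeping across all nine cases, especially the boundary situations where some potential neighboring rays (such as $\rho_{e_{i-1} - e_i}$ for $i = 1$, or $\rho_{e_{i+1} - e_{i+2}}$ for $i = d-1$) simply do not exist, forcing the corresponding monomial to shorten by one factor. Once this case analysis is done consistently with the labeling of rays into the three blocks $\{\rho_{e_i}\}$, $\{\rho_{-e_i}\}$, $\{\rho_{e_i - e_{i+1}}\}$, matching each equation of \eqref{eq:equations Gamma} to the incompatibility set of its corresponding ray is immediate.
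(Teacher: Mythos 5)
Your part (a) is fine (and is essentially the observation the paper makes just before the lemma, via Corollary~\ref{cor:ui monomials}); your sample computation $1-u_{j+d+1}=y_jy_{j+1}/q_j=u_ju_{j+1}u_{j+1+2d}$ is correct. The gap is in part (b), which is the actual content of the lemma: your compatibility criterion is false. You claim $\rho_w$ is compatible with $\rho_v$ if and only if the class of $w$ modulo $L(\rho_v)$ spans a ray of $\Star_{\Sigma_d}(\rho_v)$. Only the ``only if'' direction holds. The projection $\pi$ is not injective on rays: an incompatible ray can land on a star ray because a \emph{different}, compatible ray maps to the same image. Concretely, take $v=e_i$ with $2\le i\le d-1$. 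Then $\pi(e_{i-1}-e_i)=\pi(e_{i-1})$ spans the ray $\cone(e_{i-1})$ of $\Sigma_{i-1}\times\Sigma_{d-i}$, and $\pi(-e_{i+1})=\pi(e_i-e_{i+1})$ spans the ray $\cone(-e_{i+1})$, yet $e_{i-1}-e_i$ and $-e_{i+1}$ are both incompatible with $e_i$ (in the $(y_{i-1},y_i)$-plane the $2$-cones of $\Sigma_d$ adjacent to $e_i$ are $\cone(e_{i-1},e_i)$ and $\cone(e_i,-e_{i-1})$, not $\cone(e_i,e_{i-1}-e_i)$). So your assertion that ``the rays not projecting onto a ray of the product are precisely $\{-e_i,-e_{i+1},e_{i-1}-e_i\}$'' is wrong: only $-e_i$ projects to zero. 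Following your criterion literally would give the incompatibility set $\{-e_i\}$, which does \emph{not} match the monomial $u_{i+d}u_{i+d+1}u_{i-1+2d}$; you reach the correct answer only by asserting it. The underlying point is that compatible rays are in bijection with star rays, but the bijection sends a star ray to the unique second generator of the $2$-cone of $\Sigma_d$ over it containing $\rho_v$ -- identifying that generator already requires knowing which $2$-cones contain $\rho_v$, i.e.\ the compatibility relation itself, so Lemma~\ref{lem:stars} alone cannot deliver it.

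What is missing is a direct determination of the incompatibility relation on the rays of $\Sigma_d$, which is exactly what the paper's proof supplies: (1) the five coplanar rays $\pm e_i,\pm e_{i+1},e_i-e_{i+1}$ form a pentagonal two-dimensional fan in $\Sigma_d\cap(L(e_i)+L(e_{i+1}))$, so among them only adjacent pairs are compatible; (2) $\cone(e_i,-e_{i+2})\in\Sigma_d$ bisects $\cone(e_i-e_{i+1},e_{i+1}-e_{i+2})$, so consecutive difference rays are incompatible; (3) all remaining pairs are compatible, proved by induction on the recursive construction of $\Sigma_d$ from $\Sigma_{d-1}\times\Sigma_{P_d}$ by adding the ray $e_{d-1}-e_d$. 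With those three facts in hand, your bookkeeping step matching each monomial of \eqref{eq:equations Gamma} to the incompatibility set of its ray (all exponents being $1$) goes through; without them, or a corrected replacement for your star criterion, the proof does not.
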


\begin{proof}
The matrix $M_d$ induces the following correspondence between rays of $\Sigma_d$ and generators of $\Gamma$:
\begin{equation}\label{eq: ui to ray dictionary}
    u_i \longleftrightarrow \begin{cases}
           \cone(e_i) \quad & \text{if $i \in [d]$}\\
       \cone(-e_{j}) \quad & \text{if $i = d+j$ where  $j\in [d]$}\\
       \cone(e_{j} - e_{j+1}) \quad & \text{if $i = 2d+j$ where $j \in [d-1]$}
       \end{cases}
\end{equation} 
The simplices in $\Delta(\Sigma_d)$ given by the rays $\rho$ and $\rho'$ are compatible if and only if $\cone(\rho, \rho') \in \Sigma_d$. In this case we say that the rays are compatible.
Note that 
    \begin{enumerate}
        \item The rays generated by $\pm e_i, \pm e_{i+1}$ and $e_i - e_{i+1}$ are coplanar, so any pair of these five rays are only compatible if they are adjacent in the two-dimensional fan $\Sigma_d \cap (L(e_i) + L(e_{i+1}))$.
        \item  The cone spanned by $e_i$ and $-e_{i+2}$ lies in $\Sigma_d$ and bisects the cone spanned by $e_i - e_{i+1}$ and $e_{i+1} - e_{i+2}$, so the two rays $\cone(e_i - e_{i+1})$ and $\cone(e_{i+1} - e_{i+2})$ are incompatible.
        \item All other rays are compatible. This can be checked by induction, using the recursive construction of $\Sigma_d$ from $\Sigma_{d-1}$.
        The $d$-dimensional fan $\Sigma_{d-1} \times \Sigma_1e_d$ contains two rays which are not contained in $\Sigma_{d-1}$, namely $\cone(e_d)$ and $\cone(-e_d)$.
        These two rays are incompatible with each other, and compatible with every ray in $\Sigma_{d-1}$. 
        Any two rays in $\Sigma_{d-1}$ are compatible in $\Sigma_{d-1} \times\Sigma_1e_d$ if and only if they are compatible in $\Sigma_{d-1}$.
        
        Now $\Sigma_d$ is the refinement of $\Sigma_{d-1} \times \Sigma_1e_d$ by the ray $\rho := \cone(e_{d-1} - e_d)$.
        This ray bisects the two-cone spanned by $e_{d-1}$ and $-e_d$, so the two rays $\cone(e_{d-1})$ and $\cone(-e_d)$ are the only compatible pair in $\Sigma_{d-1} \times \Sigma_1e_d$ to become incompatible in $\Sigma_d$.
        Moreover, $\rho$ is compatible with a ray $\rho'$ in $\Sigma_d$ if and only if $\rho'$ is compatible with both $\cone(e_{d-1})$ and $\cone(-e_d)$ in $\Sigma_{d-1} \times \Sigma_1e_d$.
        Thus the only rays of $\Sigma_d$ which are incompatible with $\rho$ are $\cone(-e_{d-1})$, $\cone(e_d)$ and $\cone(e_{d-2}-e_{d-1})$.
    \end{enumerate}
It follows that the equations in \eqref{eq:equations Gamma} are of the desired form \eqref{eq:u-equations}. 
\end{proof}  

We note that correspondence~\eqref{eq: ui to ray dictionary} can also be observed on the level of the Newton polytopes: after multiplying each variable $u_i$ so that the denominator is of form $q:= \prod_{j \in [d]} p_j \prod_{j \in [d-1]} q_j$, we see that the Newton polytope of the enumerator is the convex hull of all lattice points in $\mathcal P_d$ that do not lie in the facet determined by the associated ray in $\Sigma_d$.

\begin{exa}
When $d=3$ the minimal generators of $\Gamma$ are 
\[
    u_1=\tfrac{y_1p_2}{q_1}, \mkern9mu
    u_2=\tfrac{y_2p_3}{q_2}, \mkern9mu  u_3=\tfrac{y_3}{p_3}, \mkern9mu  u_4=\tfrac{1}{p_1},\mkern9mu  u_5=\tfrac{p_1}{q_1}, \mkern9mu u_6=\tfrac{p_2}{q_2}, \mkern9mu  u_7=\tfrac{q_1}{p_1p_2}, \mkern9mu  u_8=\tfrac{q_2}{p_2p_3}.
\]
As indicated above they correspond to the rays $e_1,\, e_2,\, e_3,\, -e_1,\, -e_2,\,-e_3,\, e_1-e_2$ and $e_2-e_3$ respectively. For example, we have 
\[
    u_3=\frac{y_3}{p_3}=\frac{y_3p_1p_2q_1q_2}{p_1p_2p_3q_1q_2}
\]
As $y_3=p_3-1$ all lattice points in $\Newt(y_3p_1p_2q_1q_2)$ have a positive $y_3$ coordinate. Hence, it is the complement of the facet with normal vector $e_3$.
\end{exa}

\begin{dfn}\label{def:pellspace}
    The vanishing locus of the $u$-equations for $\Delta(\Sigma_d)$ in $\mathbb{C}^{3d-1}$ is called \emph{Pellspace} and it is denoted by $\widetilde{\mathcal{U}}_d$.
\end{dfn}

Theorem~\ref{thm:main} states that the Pellspace is a binary geometry with simplicial complex given by the inner normal fan of the pellytope. 
We proceed by showing that the Pellspace is irreducible by realizing its defining ideal as the kernel of a ring homomorphism.
Define $S:=\mathbb{C}[y_1,\dots,y_d,p_1^{\pm 1},\dots,p_d^{\pm 1},q_1^{\pm 1},\dots,q_{d-1}^{\pm 1}]$ and consider the ideal $I\subset S$ generated by $p_i-(1+y_i)$ and $q_i-(1+y_i+y_iy_{i+1})$.
Then Corollary~\ref{cor:ui monomials} determines a map
\begin{eqnarray}\label{eq:map tilde f}
    \widetilde f: \mathbb{C}[u_1,\dots,u_{3d-1}]\to S/I
\end{eqnarray}
sending each $u_i$ to its corresponding monomial in $S$. 
The kernel of $\widetilde f$ is the prime ideal we denote by $\widetilde K$. 

\begin{exa}
    For $d=3$ we compute $\widetilde K$ in \emph{Macaulay2} and find that it is minimally generated by
    \begin{eqnarray*}
    &u_1+u_4u_5-1, \quad u_2+u_5u_7u_6-1, \quad u_3+u_6u_8-1, \quad u_4+u_1u_7-1, & \\
    & u_5+u_2u_1u_8-1, \quad u_6+u_3u_2-1, \quad u_7+u_2u_4u_8-1,  \quad u_8+u_3u_5u_7-1.&
    \end{eqnarray*}
    Up to relabelling the variables these equations coincide with the equations~\eqref{eq:equations Gamma}, as well as the $u$-equations from \cite[Problem 2.23]{Lam::LectureNotes}.
\end{exa}

In order to see that the Pellspace is irreducible we need to show that the $u$-equations in \eqref{eq:equations Gamma} are the generators of $\widetilde K$.
The following corollary, the converse of Corollary~\ref{cor:ui monomials}, is needed:

\begin{cor}\label{cor:inverse ui monimials}
    We have for $i=2,\dots,d-1$
    \begin{eqnarray*}
        &    y_1 =\tfrac{u_1u_{1+2d}}{u_{d+1}}, \quad y_i=\tfrac{u_iu_{i+2d}}{u_{i-1+2d}u_{i-1+d+1}}, \quad y_d=\tfrac{u_d}{u_{2d}u_{d-1+2d}},\quad q_1=\tfrac{1}{u_{1+d+1}u_{d+1}},&\\
        &p_1= \tfrac{1}{u_{d+1}},\quad p_i= \tfrac{1}{u_{i-1+d+1}u_{i-1+2d}}, \quad   p_d = \tfrac{1}{u_{2d}u_{3d-1}}, \quad q_i= \tfrac{1}{u_{i-1+d+1}u_{i+d+1}u_{i-1+2d}}.&
    \end{eqnarray*}
\end{cor}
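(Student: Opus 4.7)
The plan is to exploit that, by Proposition~\ref{Prop:Gen_ui}, $u_1,\dots,u_{3d-1}$ form a $\mathbb{Z}$-basis of the character lattice $\Lambda$. Consequently each of $y_1,\dots,y_d,p_1,\dots,p_d,q_1,\dots,q_{d-1}$ admits a unique expression as a Laurent monomial in the $u_j$. On the level of exponent vectors, the change of basis from $(y,p,q)$ to $(u_1,\dots,u_{3d-1})$ is encoded by $M_d^{-1}$ (by the definition of the $u_i$), so the inverse change of basis is given by $M_d$ itself. Thus the exponents of $y_i$, $p_i$ and $q_i$ as monomials in the $u_j$ are precisely the entries of the corresponding row of $M_d$, and the only task is to read off those rows and compare with the stated formulas.

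First I would compute the rows of $M_d$ directly from the definition $m_{ij}=\trop(F_i)(V_j)$ with $V=(e_1,\dots,e_d,-e_1,\dots,-e_d,e_1-e_2,\dots,e_{d-1}-e_d)$. For $F_i=y_i$ the entry is $(V_j)_i$; for $F_i=p_i$ it is $\min\{0,(V_j)_i\}$; and for $F_i=q_i$ it is $\min\{0,(V_j)_i,(V_j)_i+(V_j)_{i+1}\}$. Evaluating these, the only ray generators producing nonzero entries are $\pm e_i$ and the (at most two) neighbouring difference rays $e_{i-1}-e_i$, $e_i-e_{i+1}$, from which the claimed monomial expressions drop out after tracking the index shifts.

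A perhaps more transparent alternative is to verify each identity by substituting the monomial expressions from Corollary~\ref{cor:ui monomials} and applying elementary cancellation. For example, for $2\le i\le d-1$ one computes
\[
u_{i+d}\,u_{i-1+2d}=\tfrac{p_{i-1}}{q_{i-1}}\cdot\tfrac{q_{i-1}}{p_{i-1}p_i}=\tfrac{1}{p_i},\qquad u_i\,u_{i+2d}=\tfrac{y_i p_{i+1}}{q_i}\cdot\tfrac{q_i}{p_i p_{i+1}}=\tfrac{y_i}{p_i},
\]
which together yield the formula for $y_i$. The expressions for $p_i$, $q_i$ follow from analogous two- and three-term telescopings, and the boundary cases ($i=1$, $i=d$ for $y$ and $p$; $i=1$ for $q$) fall out of the same mechanism after omitting the term whose index would fall outside the admissible range.

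The main obstacle is purely bookkeeping: the indices appear in several shifted forms ($i+d$, $i-1+d+1$, $i+2d$, $i-1+2d$, $3d-1$, etc.), and three boundary situations must be handled separately. However, no single computation involves more than a three-term product, and each reduces to the cancellation of one pair of $p$'s together with one pair of $q$'s, so the verification is entirely routine.
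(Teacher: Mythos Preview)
Your proposal is correct. The paper does not supply a proof of this corollary at all; it is stated as the ``converse of Corollary~\ref{cor:ui monomials}'' and left to the reader. Both of the arguments you outline are valid and are precisely the natural ones: reading off the rows of $M_d$ (which encode the inverse change of basis, by Lemma~\ref{Lemma:M invertible} and Proposition~\ref{Prop:Gen_ui}), or direct substitution of the monomials from Corollary~\ref{cor:ui monomials} followed by cancellation. Your sample verification of the $y_i$ case is accurate, and the remaining cases are, as you say, routine telescoping.
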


\begin{prop}\label{prop: generators of K}
    The ideal $\widetilde K\subset \mathbb C[u_1,\dots,u_{3d-1}]$, kernel of the map $\widetilde f$ defined in \eqref{eq:map tilde f}, is generated by the $u$-equations for $\Delta(\Sigma_d)$ in \eqref{eq:equations Gamma}.
\end{prop}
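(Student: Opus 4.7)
The plan is to prove the two inclusions $J \subseteq \widetilde K$ and $\widetilde K \subseteq J$ separately. The first is a direct computation: substitute the monomial expressions of Corollary~\ref{cor:ui monomials} into each of the nine cases of the $u$-equations in \eqref{eq:equations Gamma} and simplify modulo $I$ using $p_i = 1+y_i$ and $q_j = 1+y_j+y_jy_{j+1}$. A representative case is $i = d+1$:
\[
\widetilde f(R_{d+1}) \;=\; \frac{1}{p_1} + \frac{y_1 p_2}{q_1}\cdot\frac{q_1}{p_1 p_2} - 1 \;=\; \frac{1 + y_1 - p_1}{p_1} \;\equiv\; 0 \pmod{I},
\]
and each of the other cases collapses analogously, after cancellation, to a single defining relation of $I$.

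For the reverse inclusion the strategy is to localize. Set $A := \mathbb{C}[u_1^{\pm 1},\dots,u_{3d-1}^{\pm 1}]$. By Proposition~\ref{Prop:Gen_ui}, $(u_1,\dots,u_{3d-1})$ forms a $\mathbb Z$-basis of the character lattice $\Lambda$, giving an isomorphism $A \cong \mathbb C[\Lambda] = \mathbb C[y^{\pm},p^{\pm},q^{\pm}]$. Since each $u_k$ maps to a unit of $S/I$, the map $\widetilde f$ extends to a surjection $\widetilde f^{\,e}\colon A \twoheadrightarrow S/I$, whose kernel $\widetilde K^{\,e}$ is generated, viewed in $\mathbb C[\Lambda]$, by the $2d-1$ defining relations $p_i - 1 - y_i$ and $q_j - 1 - y_j - y_jy_{j+1}$ of $I$. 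Rewriting each such generator in the $u$-coordinates via Corollary~\ref{cor:inverse ui monimials} and clearing denominators, a short calculation shows that each is a unit multiple of some $R_k$; for instance,
\[
p_1 - 1 - y_1 \;=\; \frac{1 - u_{d+1} - u_1 u_{2d+1}}{u_{d+1}} \;=\; -\frac{R_{d+1}}{u_{d+1}}.
\]
Combined with the first inclusion, this yields $J^{\,e} = \widetilde K^{\,e}$ in $A$.

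Since $\widetilde f$ factors through $A$, we always have $\widetilde K = \widetilde K^{\,e} \cap \mathbb{C}[u_1,\dots,u_{3d-1}]$, so the equality $J = \widetilde K$ reduces to showing that $J$ is saturated with respect to $u_1 u_2 \cdots u_{3d-1}$, i.e., $J = J^{\,e} \cap \mathbb C[u_i]$. This saturation is the main obstacle. I would establish it by induction on $d$, with the base case $d=1$ handled by Example~\ref{Ex:d1BinaryGeom}. For the inductive step, I would verify that no $u_k$ is a zero-divisor modulo $J$ by analyzing the subvariety $V(J)\cap\{u_k=0\}$: substituting $u_k = 0$ into the generators of $J$ forces $u_j = 1$ whenever $j$ is incompatible with $k$ in $\Delta(\Sigma_d)$, and the remaining equations coincide, via the dictionary \eqref{eq: ui to ray dictionary}, with the $u$-equations for the link $\lk_{\Delta(\Sigma_d)}\{k\}$. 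By Lemma~\ref{lem:stars}, this link is the flag complex of a product of pellytope fans of strictly smaller dimension; invoking the inductive hypothesis for Proposition~\ref{prop: generators of K} in each factor, together with the product structure of Proposition~\ref{Prop:ProductBinGeom}, the resulting ideal is prime of the expected codimension. Hence $V(J)\cap\{u_k=0\}$ is a reduced irreducible proper subvariety of $V(J)$ of dimension $d-1$, which precludes associated primes of $J$ containing $u_k$ and establishes the saturation, completing the proof.
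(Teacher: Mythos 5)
Your computational core coincides with the paper's argument: the paper also factors $\widetilde f$ through the Laurent ring $A=\mathbb{C}[u_1^{\pm1},\dots,u_{3d-1}^{\pm1}]$ via the isomorphism given by $M_d^{-1}$, identifies the extended kernel with the ideal generated by $p_i-(1+y_i)$ and $q_j-(1+y_j+y_jy_{j+1})$, and converts these into the equations \eqref{eq:equations Gamma} by unit manipulations. (One small inaccuracy on your side: not every generator of $I$ pulls back to a unit multiple of a single $R_k$; for instance $p_i-1-y_i$ with $2\le i\le d-1$ becomes $\bigl(1-u_{i+d}u_{i-1+2d}-u_iu_{i+2d}\bigr)$ divided by a monomial, which is an $A$-linear combination of three of the equations, not a unit times one of them — harmless, but the ``short calculation'' is the combination argument the paper actually carries out.) Where you genuinely diverge is the contraction step: the paper passes from $K$ to $\widetilde K=K\cap\mathbb{C}[u_1,\dots,u_{3d-1}]$ with the one-line remark that the generators are polynomial, whereas you correctly isolate the real content, namely that the ideal $J$ generated by \eqref{eq:equations Gamma} must be saturated with respect to $u_1\cdots u_{3d-1}$. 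Your instinct here is sound; unfortunately the argument you offer for the saturation does not close it.

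The gap is the final inference: from ``$J+(u_k)$ is prime of dimension $d-1$'' you conclude that no associated prime of $J$ contains $u_k$. That implication is not valid. Knowing the ideal $J+(u_k)$ — even knowing it is prime of the expected dimension — does not control the embedded primes of $J$ itself: a priori $J$ could have an embedded primary component whose radical is exactly $J+(u_k)$, or a prime supported on a deeper boundary stratum strictly containing $J+(u_k)$, and in either case $u_k$ would be a zerodivisor modulo $J$ while your computation of $J+(u_k)$ is unchanged. As a cautionary example, for $J=(x^2,xy)\subset\mathbb{C}[x,y]$ and $u=x$ one has $J+(u)=(x)$ prime, yet $x$ lies in every associated prime of $J$; your dimension-drop hypothesis excludes this particular example, but you give no argument that it excludes embedded components in your situation. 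To conclude that $u_1\cdots u_{3d-1}$ is a nonzerodivisor modulo $J$ one needs an extra ingredient: e.g.\ unmixedness or Cohen--Macaulayness of $\mathbb{C}[u]/J$ (say via a Gr\"obner degeneration to the Stanley--Reisner ring of $\Delta(\Sigma_d)$), or a local analysis at \emph{every} prime containing some $u_k$ — a local argument does work at the codimension-one primes $J+(u_k)$ themselves, since there the maximal ideal becomes principal modulo $J$, but associated primes supported on deeper strata are untouched by what you wrote. So the step you yourself call ``the main obstacle'' remains open in the proposal; by contrast the paper simply does not engage with it, so neither text contains a complete treatment of this point, and your proof is not correct as it stands.
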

\begin{proof}
    Consider the following maps
\begin{equation}\label{eq:maps defining U}
    \mathbb{C}[u_1^{\pm 1},\dots, u_{3d-1}^{\pm1}]\overset{f}{\longrightarrow}  S \overset{g}{\longrightarrow}  \mathbb{C}[y_1,\dots,y_d],    
\end{equation}
where $f$ is an isomorphism given by the matrix $M_d^{-1}$ and $g$ is the surjection sending $y_i$ to $y_i$, $p_i$ to $1+y_i$ and $q_i$ to $1+y_i+y_iy_{i+1}$.

Denote $K:=\ker(g\circ f)$. 
Notice that we have $K\cap \mathbb C[u_1,\dots,u_{3d-1}]=\widetilde K$. 
It suffices to show that $K$ is generated by the equations in~\eqref{eq:equations Gamma}. 
In this case, as all the equations are polynomial, the claim for $\widetilde K$ follows.
The kernel of $g$ is generated by $p_i-(1+y_i)$ and $q_j-(1+y_j+y_jy_{j+1})$ for $1\le i\le d$ and $1\le j\le d-1$. The preimages of these symbols generate $K = f^{-1}(\ker(g))$, since $f$ is an isomorphism.
    
We show how the equations~\eqref{eq:equations Gamma} are obtained as preimages of elements in $\ker(g)$. 
Conversely, keeping in mind that $u_i$ are units in $\mathbb{C}[u_1^{\pm 1},\dots,u_{3d-1}^{\pm 1}]$, the same argument will show that the ideal generated by the equations~\eqref{eq:equations Gamma} coincides with $K$.
We compute
\[
u_{d+2}u_{d+1}f^{-1}(q_1-y_1p_2-1) =1-u_1-u_{1+d+1}u_{d+1} \in K.
\]
Similarly, all equations for $u_i$ with $2\le i\le d-1$ are obtained from $f^{-1}(q_i-y_ip_{i+1}-1)$. We further observe that
\begin{align*}
         u_{2d}u_{d-1+2d}f^{-1}(p_d-1-y_d) &= 1 - u_d-u_{2d}u_{d-1+2d}\\
         \text{and} \quad u_{d+1}f^{-1}(p_1-1-y_1)&=1-u_{d+1}-u_1u_{1+2d}.
\end{align*}
The next relations take one more step -- for example $1-u_{2d+1}-u_2u_{d+1}u_{2d+2}$ is obtained from multiplying the following expression by $u_{d+1}$:
\begin{align*}
    f^{-1}(p_1-y_1-1)-u_{1+2d}u_{d+2}f^{-1}(q_1-y_1p_2-1)    - u_{d+3}u_{2d+1}f^{-1}(p_2-y_2-1).
\end{align*}
Similar expressions exist for the $u$-equations for $u_{2d+i}, \; i=2,\dots,d-1$. 
Finally, the expression $1-u_{i+d+1}-u_{i+1}u_{i+1+2d}$ equals
\begin{align*}
     u_{i-1+d+1}u_{i+d+1}u_{i-1+2d}f^{-1}\big((q_i-1-y_i-y_iy_{i+1})&-(p_i-y_i-1)\big)
\end{align*}
recovers the $u$-equations for $u_{i+d+1}$ for $1\le i\le d-1$.
\end{proof}

We are now prepared to prove Theorem~\ref{thm:main}:

\begin{proof}[Proof of Theorem~\ref{thm:main}]
The variety $\widetilde{\mathcal U}_d$ is by definition the affine closure of the variety defined by the vanishing of $K=\ker(g \circ f)$ in $(\mathbb C^*)^{3d-1}$ (or equivalently, $\widetilde{\mathcal U}_d$ is the vanishing set of $\widetilde K=\ker(\widetilde f)$ in $\mathbb C^{3d-1}$ as defined above).
By Proposition~\ref{prop: generators of K}, $K$ is generated by the equations~\eqref{eq:equations Gamma}, which are of the desired form~\eqref{eq:u-equations} by Lemma~\ref{lem:u-eqn Gamma}.
We proceed by verifying the items (i), (ii) and (iii) of Definition~\ref{Def:BinaryGeom}.

For (i) we need to show that $\widetilde{\mathcal U}_d$ is an irreducible variety of dimension $d$.   
Irreducibility follows from Proposition~\ref{prop: generators of K}.
Recall that a surjection of coordinate rings corresponds to an inclusion of the affine varieties. 
In particular, the existence of the map $g$ in \eqref{eq:maps defining U} shows that 
\[
    \mathbb{C}(\widetilde{\mathcal U}_d)=\text{Frac}( \mathbb{C}[u_1,\dots,u_{3d-1}]/K)\cong  \mathbb{C}(y_1,\dots,y_d),
\]
so ${\mathcal U}_d$ is of dimension $d$. 

For (ii) and (iii), we use Lemma~\ref{Lemma:Link} and induction on $d$. 
For $d=1$, Example~\ref{Ex:d1BinaryGeom} shows that $\widetilde{\mathcal U}_1$ is a binary geometry. 
Assume that for all $d' < d$ we have that $\widetilde U_{d'}$ is a binary geometry. 
From Lemma~\ref{Lemma:Link} and Lemma~\ref{lem:stars} for each $k \in [3d-1]$ it follows that $(\widetilde{\mathcal U}_d)_{\{k\}}$ is a product of binary geometries.
Hence $(\widetilde{\mathcal U}_d)_{\{k\}}$  is a binary geometry by Proposition~\ref{Prop:ProductBinGeom}. 
Using Lemma~\ref{Lemma:Link}(b), we conclude that the Pellspace $\widetilde{\mathcal U}_d$ is a binary geometry for $\Delta(\Sigma_d)$.
\end{proof}

\subsection{Relationship between the Pellspace and $\widetilde{\cM}_{0,n}$}\label{sec:curves}

In this section we compare the binary geometries given by the pellytope and the associahedron. 
Let $n = d+3$ and consider an $n$-gon with cyclically labelled vertices. We label the arcs of the $n$-gon by $ij$ where $1 \leq i < j-1 \leq n-2$, and say that two arcs are incompatible if they cross each other. The $u$-equations determined by the associahedron are given by
\begin{equation}\label{eqn: u-eqns for associahedron}
    u_{ij} + \prod_{kl \not \sim ij} u_{kl} = 1.
\end{equation}
The ABHY construction of the associahedron in kinematic space is of particular interest due to its connection to the positive geometry on $\cM_{0,n}$~\cite{ABYS}. 
This realisation~$\mathcal{A}_{n-3}$ of the asociahedron can be defined~\cite[(5.5)]{AHL::Stringy} as the Newton polytope of the polynomial
\begin{equation}\label{eqn: associahedron as a Newton polytope}
        G_{n-3} := \prod_{ij}(1 \, +\,  y_i\,  + \, y_iy_{i+1} \, +\,  \cdots \, +\,  y_iy_{i+1} \cdots y_{j-2}) \; \in \CC[y_1, \ldots, y_{n-3}].
\end{equation}
Cones in the normal fan $\Sigma_{\mathcal{A}_{n-3}}$ of the associahedron correspond to subdivisions of the $n$-gon -- rays correspond to arcs $ij$.
The ABHY realisation is equivalent to setting the positive orthant in $\mathbb{R}^{n-3}$ to be the maximal cone corresponding to the triangulation of the $n$-gon given by every arc centred at a single point.
Labelling this point $n-1$ gives the following dictionary between arcs on the $n$-gon and primitive generators of the rays of $\Sigma_{\mathcal{A}_{n-3}}$:
\begin{equation}\label{eqn: dictionary arcs to rays}
    e_i \longleftrightarrow i \,(n-1), \quad -e_i \longleftrightarrow (i+1) \,n \quad \text{and} \quad e_i - e_k \longleftrightarrow i \,(k+1).
\end{equation}  
We note that $\Sigma_{\mathcal{A}_{d}}$ is a refinement of $\Sigma_d$ -- the polynomial defining $\mathcal{P}_d$ clearly divides $G_d$, so the pellytope $\mathcal{P}_d$ is a Minkowski summand of $\mathcal{A}_d$.

\begin{exa}
    When $d=1$ or $2$, the pellytope and the ABHY associahedron coincide. In the case $d=3$, the normal fan to $\mathcal{A}_3$ is given by adding a single ray $\cone(e_1 - e_3)$ to $\Sigma_3$. 
\end{exa}

The binary geometry defined by the associahedron $\mathcal{A}_{n-3}$ is $\widetilde{\cM}_{0,n}$, an affine chart on the moduli space of stable curves $\overline{\cM}_{0,n}$ (cf.~\cite[Lectures 1-2]{Lam::LectureNotes}).
We can therefore use the relationship between $\mathcal{P}_d$ and $\mathcal{A}_{d}$ to give a moduli interpretation of $\widetilde{\mathcal U}_d$.
Corollary~\ref{cor:M0n and pellytope} is a direct consequence of the following Lemma:

\begin{lemma}
\label{Lemma:BlowUp}
Suppose that $f_1$ and $f_2 \in \CC[y_1, \ldots, y_d]$ are polynomials with non-vanishing constant term.
Let $U_i \subset (\CC^*)^d$ be the very affine variety given by the locus where $f_i \neq 0$.
Let $\widetilde{U}_i = \Spec \CC[\Gamma_i]$ be the affine closure, where $\Gamma_i$ denotes the semigroup of bounded characters on $U_i$.

If $f_2$ divides $f_1$, then there is a birational morphism $\pi: \widetilde{U}_1 \rightarrow \widetilde{U}_2$ which is the restriction of a toric blowup of projective toric varieties $X_1 \rightarrow X_2$.
\end{lemma}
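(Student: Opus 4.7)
The plan is to derive $\pi$ from a natural inclusion of semigroups of bounded characters, and then to realize $\widetilde{U}_i$ as the complement of an ample toric divisor inside the projective toric variety of $\Newt(f_i)$.

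\emph{Step 1 (the morphism).} Write $f_1 = f_2 \cdot g$. Every irreducible factor of $f_2$ is an irreducible factor of $f_1$, so the character lattice $\Lambda_2$ of $U_2$ embeds in $\Lambda_1$, and this inclusion preserves boundedness on $\mathbb{R}^d_{>0}$; thus $\Gamma_2 \hookrightarrow \Gamma_1$. The induced ring map $\CC[\Gamma_2] \to \CC[\Gamma_1]$ gives the morphism $\pi \colon \widetilde{U}_1 \to \widetilde{U}_2$. For birationality, $\pi$ restricted to $U_1 \subseteq \widetilde{U}_1$ factors as $U_1 \hookrightarrow U_2 \hookrightarrow \widetilde{U}_2$, and both $\widetilde{U}_i$ share the function field $\CC(y_1, \ldots, y_d)$.

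\emph{Step 2 (the toric blowup).} Let $P_i := \Newt(f_i)$ and let $X_i$ be the projective toric variety with torus $(\CC^*)^d$ and fan $\Sigma_{P_i}$. Since $P_1 = P_2 + \Newt(g)$ is a Minkowski sum, $\Sigma_{P_1}$ refines $\Sigma_{P_2}$ by~\eqref{Eq:MinkowskiRefinement}, and the induced toric morphism $\varphi \colon X_1 \to X_2$ is a toric blowup. The polynomial $f_i$ is a global section of the ample line bundle on $X_i$ associated to $P_i$ and cuts out a Cartier divisor $D_i \subset X_i$.

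\emph{Step 3 (identification and compatibility).} I claim that $\widetilde{U}_i \cong X_i \setminus D_i$. By Lemma~\ref{Lemma:PrimitiveEnough}, a character $\chi \in \Lambda_i$ lies in $\Gamma_i$ if and only if $\trop(\chi)(v) \geq 0$ for every ray generator $v$ of $\Sigma_{P_i}$, which is precisely the condition that $\chi$ have no pole along any toric boundary divisor of $X_i$, i.e.\ that $\chi$ is regular on $X_i \setminus D_i$. This identifies $\CC[\Gamma_i]$ with $\Gamma(X_i \setminus D_i, \mathcal{O})$. Since $f_1 = f_2 \cdot g$, one checks that $\varphi^{-1}(D_2) \subseteq D_1$ (the pullback of the section $f_2$ appears as a factor of $f_1$ in the appropriate line bundle), so $\varphi$ maps $X_1 \setminus D_1$ into $X_2 \setminus D_2$, and this restriction agrees with $\pi$ because both are induced by the inclusion $\Lambda_2 \hookrightarrow \Lambda_1$.

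The main obstacle is the identification in Step 3: one must match the tropical inequality description of $\Gamma_i$ with the vanishing-order condition along the toric boundary divisors, both of which are indexed by the rays of $\Sigma_{P_i}$. Once this is in hand, the compatibility of the blowup with $\pi$ follows from the naturality of the construction.
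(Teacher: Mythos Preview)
Your proof is correct and follows essentially the same approach as the paper: both identify $\widetilde{U}_i$ with $X_i \setminus D_i$ for $X_i$ the projective toric variety of $\Newt(f_i)$, obtain the toric blowup from the refinement $\Sigma_{P_1} \preceq \Sigma_{P_2}$ induced by the Minkowski factorization $P_1 = P_2 + \Newt(g)$, and restrict via $\varphi^{-1}(D_2) \subseteq D_1$. The only difference is that the paper outsources the identification $\widetilde{U}_i \cong X_i \setminus D_i$ to \cite[Proposition~5.19]{Lam::LectureNotes}, whereas you sketch it directly via the tropical description of $\Gamma_i$; your extra Step~1 constructing $\pi$ first from the semigroup inclusion is not needed but is harmless.
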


\begin{proof}
    Let $X_i$ be the projective toric variety associated to the normal fan of the Newton polytope $P_i = \Newt f_i$.
    The polynomial $f_i$ determines a section of the very ample line bundle on $X_i$ associated to $P_i$ -- let $H_i \subset X_i$ be the zero locus of this section. 
    The affine variety $\widetilde{U}_i$ may be identified with $X_i \setminus H_i$ (cf.~\cite[Proposition 5.19]{Lam::LectureNotes}).
    If $f_2$ divides $f_1$, then the normal fan of $P_1$ is a refinement of the normal fan of $P_2$, which induces a toric blowup $\pi: X_1 \rightarrow X_2$.
    Moreover, one can check that $\pi^{-1}(H_2) \subset H_1$, so the toric morphism $\pi$ restricts to a map $\widetilde{U}_1 \rightarrow \widetilde{U}_2$.
\end{proof}

Moreover, Lemma~\ref{lem:stars} shows that the boundary of $\widetilde{\mathcal U}_d$ has a recursive structure similar to that of $\overline{\cM}_{0,n}$ -- the strata of $\widetilde{\mathcal U}_d$ are isomorphic to products of lower-dimensional Pellspaces $\widetilde{\mathcal U}_i$ for a collection of $i <d$.

Since the blowup $X_{\Sigma_{\mathcal{A}_{d}}} \rightarrow X_{\Sigma_d}$ is toric, the exceptional locus is contained in the complement of the very affine varieties $\cM_{0,n}$ and $\mathcal{U}_d$. 
This suggests we can consider $\widetilde{\mathcal U}_d$ to be an affine chart on some smaller compactification of $\cM_{0,n}$ than the space of $n$-pointed stable curves $\overline{\cM}_{0,n}$.
We give a detailed description of this compactification in the case $d=3$ in the following example.

\begin{figure}[t]
\centering
\begin{minipage}[h]{0.47\textwidth}
\centering
\includegraphics[scale=0.6]{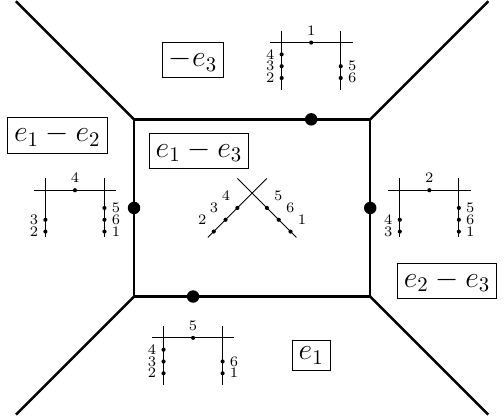}
\caption*{\small (a) $F \subset \mathcal{A}_3$ and its adjacent faces.
    }
\end{minipage}
\hfill
\begin{minipage}[h]{0.5\textwidth}
\centering
\includegraphics[scale=0.6]{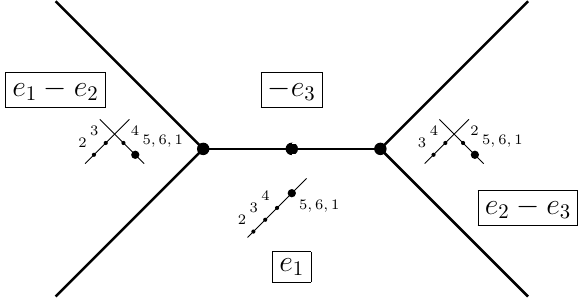}
\caption*{\small(b) $E \subset \mathcal{P}_3$ and its adjacent faces.
}
\end{minipage}

\caption{Figures (a) and (b)  above respectively show $F \subset \mathcal{A}_3$, the face of the associahedron corresponding to the diagonal $14$ on the hexagon, and $E \subset \mathcal{P}_3$, the edge of the pellytope dual to $\cone (e_1, -e_3) \in \Sigma_3$, as well as their adjacent faces.
Each of these faces is labelled with its primitive inward normal vector.
The face $F$ and its edges, as well as the edge $E$ and its endpoints, are also labelled with diagrams of the curves represented by points in the interior of the corresponding strata of $\widetilde{\mathcal{M}}_{0,6}$ and  $\widetilde{\mathcal{U}}_{3}$ respectively. }
\label{fig: facets}
\end{figure}

\begin{exa}
When $d=3$, the exceptional divisor of the toric blowup is the toric divisor $D_\rho \subset X_{\Sigma_{\mathcal{A}_3}}$ associated to the ray $\rho = \cone(e_1 - e_3)$, the only ray of $\Sigma_{\mathcal{A}_3}$ not contained in $\Sigma_3$. 
By the dictionary~(\ref{eqn: dictionary arcs to rays}), the intersection $D_\rho \cap \widetilde{\cM}_{0,6}$ is given by the equation $u_{14} =0$.

Which stable curves are contained in this stratum of $\widetilde{\cM}_{0,6}$?
The variables $u_{ij}$ are \textit{dihedral coordinates} on $\cM_{0,n}$ -- these are cross-ratios
\begin{equation}\label{eqn: cross-ratio}
    u_{ij} = \frac{(x_i - x_{j+1})(x_{i+1} - x_j)}{(x_i - x_j)(x_{i+1} - x_{j+1})},
\end{equation}
where each point on $\cM_{0,n}$ represents a choice of (a $\text{PGL}(2)$-orbit of) $n$ distinct points $p_i := [x_i:1] \in \mathbb{P}^1$ -- that is, the isomorphism class of a smooth rational curve with $n$ marked points.
The cross-ratio~(\ref{eqn: cross-ratio}) determines the image $[u_{ij}:1]$ of the point $p_{j+1}$ under the $\text{PGL}(2)$-transformation that sends $(p_i, p_j, p_{i+1})$ to $(0,1,\infty)$.
Thus the point $p_{j+1}$ collides with $p_i$ as $u_{ij} \rightarrow 0$, and $p_{j+1}$ collides with $p_j$ as $u_{ij} \rightarrow 1$.

One sees that the marked points $p_5$ and $p_1$ collide on our six-pointed stable curve as $u_{14} \rightarrow 0$.
However, the relations~(\ref{eqn: u-eqns for associahedron}) satisfied by the dihedral coordinates imply that $u_{25}, \, u_{26}, \, u_{35}, \, u_{36} \rightarrow 1$ as $u_{14} \rightarrow 0$, so the three points $p_5$, $p_6$ and $p_1$ collide as $u_{14} \rightarrow 0$.
We consider a point in $\{u_{14} =0\}$ to represent the \textit{stabilisation} of the limit of a one-parameter family in $\cM_{0,6}$ of smooth curves $C_t$ in which the marked points $p_5$, $p_6$ and $p_1$ collide as $t \rightarrow 0$.
In the stabilisation $C_0$, the three colliding points break off into a second component $C'$, and are distributed on $C'\cong \mathbb{P}^1$ according to their ratios of approach to one another.
Indeed, one can check that $\{u_{14} =0\} \cap \widetilde{\cM}_{0,6} \cong \widetilde{\cM}_{0,4} \times \widetilde{\cM}_{0,4}$ -- each fibre $\widetilde{\cM}_{0,4}$ parametrises one of the two irreducible components of $C_0$ (each considered as four-pointed curves in order to encode the position of the intersection point $q$).

The image of the stratum of $\widetilde{\cM}_{0,n}$ corresponding to a cone $\sigma \in \Sigma_{\mathcal{A}_{d}}$ is an open subset of the stratum $(\widetilde{\mathcal U}_d)_\tau$, where $\tau$ is the smallest cone in $\Sigma_d$ containing $\sigma$.
In particular we have $\pi(\{u_{14} = 0\}) \subset \{u_1 = u_6 = 0\} \cap\widetilde{\mathcal U}_3$, where $u_1$ and $u_6$ are the variables associated to the rays $\cone(e_1)$ and $\cone(-e_3) \in \Sigma_d$ by the identification~\eqref{eq: ui to ray dictionary}.
By Lemma~\ref{lem:stars}, this stratum of $\widetilde{\mathcal U}_3$ is isomorphic to $\widetilde{\mathcal U}_1 \cong \widetilde{\cM}_{0,4}$, and one can see from analysis of the adjacent strata that $\pi$ contracts the fibre of $\{u_{14} = 0\}$ which parametrises the distribution of $p_5, \, p_6, \, p_1$ and $q$ on $C'$.
We may therefore consider points on $\{u_1 = u_6 = 0\} \cap \widetilde{\mathcal U}_3$ to represent smooth (but unstable) curves on which the marked points $p_5$, $p_6$ and $p_1$ coincide and the other marked points are distinct from each other and $p_1$.


\end{exa}

\bigskip \bigskip

\noindent {\bf Acknowledgements.} 
This project started during the School on \emph{Combinatorial Algebraic Geometry from Physics} May 13-17 2024 at MPI MiS in Leipzig. We are grateful to Thomas Lam for suggesting the Problem in his lecture notes \cite{Lam::LectureNotes} and to Bernd Sturmfels for initiating the collaboration. L.B. is partially funded by the CONAHCyT CF-2023-G-106 and PAPIIT IA100724 dgapa UNAM 2024.

\begin{small}

\end{small}

\end{document}